\newtheorem{theorem}{Theorem}[section]
\newtheorem{lemma}[theorem]{Lemma}
\newtheorem{corollary}[theorem]{Corollary}
\newtheorem{definition}[theorem]{Definition}
\newtheorem{fact}[theorem]{Fact}
\newtheorem{question}{Question}
\newtheorem*{theorem*}{Theorem}
\newcommand{\forceP}{\mathbb{P}}
\newcommand{\forceQ}{\mathbb{Q}}
\newcommand{\forceR}{\mathbb{R}}
\newcommand{\ZFC}{\mathsf{ZFC}}
\newcommand{\ZFP}{\mathsf{ZF}^-}
\newcommand{\PFA}{\mathsf{PFA}}
\newcommand{\BPFA}{\mathsf{BPFA}}
\newcommand{\CH}{\mathsf{CH}}
\newcommand{\MA}{\mathsf{MA}_{\omega_1}}
\newcommand{\T}{\mathsf{T}}
\newcommand{\PD}{\mathsf{PD}}
\newcommand{\TC}{\mathsf{T}_{\vec{C}}}
\newcommand{\NS}{\hbox{NS}_{\omega_1}}
\def\undertilde#1{\mathord{\vtop{\ialign{##\crcr
$\hfil\displaystyle{#1}\hfil$\crcr\noalign{\kern1.5pt\nointerlineskip}
$\hfil\tilde{}\hfil$\crcr\noalign{\kern1.5pt}}}}}
\title{Forcing Axioms and the Uniformization-Property.}
\author{ Stefan Hoffelner\footnote{WWU M\"unster. The author thanks F. Schlutzenberg and R. Schindler for several discussions on the topic. }}
\begin{document}

\maketitle

\begin{abstract}
We show that there are models of $\MA$ where the $\bf{\Sigma^1_3}$-uniformization property holds. Further we show that $``\BPFA$+ $\aleph_1$ is not inaccessible to reals$"$ outright implies that the $\bf{\Sigma^1_3}$-uniformization property is true. 
\end{abstract}

\section{Introduction}

The question of finding nicely definable 
choice functions for a  definable family of sets is an old and well-studied subject in descriptive set theory.
Recall that for an $A \subset 2^{\omega} \times 2^{\omega}$, we say that $f$ is a uniformization (or a uniformizing function) of
$A$ if there is a function $f: 2^{\omega} \rightarrow 2^{\omega}$, 
$dom(f)= pr_1(A)$ and the graph of $f$ is a subset of $A$.

\begin{definition}
 We say that a pointclass $\Gamma$ has the uniformization
 property iff every element of $\Gamma$ admits a uniformization 
 in $\Gamma$.
\end{definition}

It is a classical result due to M. Kondo that lightface $\Pi^1_{1}$-sets do have the 
uniformization property, this also yields the uniformization property for $\Sigma^1_{2}$-sets. This is all $\ZFC$ can prove about the uniformization property of projective sets. In the constructible universe $L$, for every $n \ge 3$, $\Sigma^1_{n}$ does have the uniformization property which follows from the existence of a good wellorder by an old result of Addison (see \cite{Addison}). Recall that a $\Delta^1_n$-definable wellorder $<$ of the reals is a \emph{good} $\Delta^1_n$-wellorder if $<$ is of ordertype $\omega_1$ and the relation $<_{I} \subset (\omega^{\omega})^2$ defined via
\[ x <_I y \Leftrightarrow \{(x)_n \, : \, n \in \omega\}= \{z \, : \, z < y\} \]
where $(x)_n$ is some fixed recursive partition of $x$ into $\omega$-many reals, is a $\Delta^1_n$-definable. It is easy to check that the canonical wellorder of the reals in $L$ is a good $\Delta^1_2$-wellorder so the $\Sigma^1_n$-uniformization property follows.

On the other hand, the axiom of projective determinacy ($\PD$) draws a very different picture.  Due to a well-known result of Moschovakis (see \cite{Kechris} 39.9), $\PD$ implies that $\Pi^1_{2n+1}$ and $\Sigma^1_{2n+2}$-sets have the uniformization property for $n >1$.
The connection of $\PD$ with forcing axioms is established via core model induction. Under the assumption of the proper forcing axiom, Schimmerling, Steelt and Woodin showed  that $\PD$ is true (in fact much more is true, see \cite{Steel}), thus under $\PFA$ the $\Pi^1_{2n+1}$-uniformization holds for $n>1$. 
As the uniformization property for one pointclass rules out the uniformization property of the dual pointclass, the behaviour of sets of reals in $L$ and under $\PFA$ contradict each other.

The main goal of this paper is to investigate the situation under the weaker forcing axioms $\MA$ and $\BPFA$. We show that for $n=3$ the situation is orthogonal. Indeed both are compatible with $\Sigma^1_3$-uniformization hence the failure of $\Pi^1_{3}$-uniformization. If we add the anti-large cardinal assumption $\omega_1=\omega_1^L$, then the theory $\BPFA$ plus “$\omega_1=\omega_1^L"$ implies the $\Sigma^1_3$-uniformization property outright.

\begin{theorem*}
Starting with $L$ as our ground model, there is a generic extension of $L$ in which $\MA$ and the $\Sigma^1_3$-uniformization property holds true.
\end{theorem*}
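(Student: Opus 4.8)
The plan is to reduce the $\Sigma^1_3$-uniformization property to the existence of a suitably definable wellorder and then to force such a wellorder while simultaneously forcing $\MA$. The cleanest route is \emph{not} through a ``good'' wellorder of ordertype $\omega_1$ (which would entail $\CH$ and is therefore incompatible with $\MA$), but through a wellorder $<_w$ of the reals that is $\Sigma_1$-definable over $H_{\omega_2}$ from a parameter $A\subseteq\omega_1$. Given such a $<_w$, one uniformizes a $\Sigma^1_3$ relation $R(x,y)$ by letting $f(x)$ be the $<_w$-least $y$ with $R(x,y)$: the key point is that the set of $<_w$-predecessors of any $y$ has size at most $\aleph_1$ and hence is an element of $H_{\omega_2}$, so the clause ``no earlier witness'' becomes a \emph{bounded} quantifier over $H_{\omega_2}$ and does not raise the $\Sigma_1$-complexity. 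Thus the graph of $f$ is again $\Sigma_1$ over $H_{\omega_2}$ in the parameter $A$. The final descriptive ingredient is the translation, valid because we will have $\omega_1=\omega_1^L$, that a relation which is $\Sigma_1$ over $H_{\omega_2}$ in a parameter $A\subseteq\omega_1$ that is itself $\Sigma^1_3$-definable is in fact $\Sigma^1_3$; this mirrors the mechanism behind the companion result that $\BPFA$ plus ``$\omega_1=\omega_1^L$'' outright implies $\Sigma^1_3$-uniformization. So the target reduces to: produce a model of $\MA$ carrying a $\Sigma_1(H_{\omega_2})$-wellorder of the reals whose parameter is $\Sigma^1_3$.

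Next I would build the model by a finite-support ccc iteration $\langle \forceP_\alpha : \alpha\le\omega_2\rangle$ over $L$, driven by a bookkeeping function with two tasks. The first task is familiar: enumerate, with cofinally many repetitions, all ccc posets of hereditary size $<\aleph_2$ appearing along the iteration and force with them, so that in $L^{\forceP_{\omega_2}}$ every ccc poset meets every family of $\aleph_1$ dense sets, i.e.\ $\MA$ holds and $2^{\aleph_0}=\aleph_2$. The second task is to code, step by step, a wellorder of the reals into a set $A\subseteq\omega_1$: whenever a new real $r$ is caught by the bookkeeping I would, at the next stage, use Solovay almost-disjoint coding to write a prescribed pattern into $A$ recording the position of $r$ in the wellorder being built, against a fixed $L$-definable almost disjoint family. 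Since almost-disjoint coding is ccc, interleaving it does not endanger the first task; and because the ground model is $L$ we have $\diamondsuit$, global square and condensation available, so the localization needed to make $A$ decodable by countable models can be read off the $L$-hierarchy rather than installed by a separate (non-ccc) reshaping — this is what keeps the whole iteration ccc.

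The verification then has two halves. For $\MA$ one checks ccc-ness of the iteration (a finite-support iteration of ccc posets is ccc) and that the bookkeeping has handled every relevant poset, which is routine. For definability one must show that in the final model the relation ``$\alpha\in A$'', and hence the wellorder $<_w$, is computed by the intended $\Sigma^1_3$ formula: informally, $\alpha\in A$ iff there is a real coding a countable, wellfounded model $M$ with $\alpha\in M$, with $M$ a model of $\ZFP$ together with ``$V=L[\bar A]$'', seeing the relevant fragment of the almost-disjoint coding, together with a $\Pi^1_2$ correctness clause asserting that $M$ computes $\omega_1$ and the coding apparatus correctly. The existence of such an $M$ is $\Sigma^1_2$ and the correctness clause is $\Pi^1_2$, so membership in $A$ is $\Sigma^1_3$; and $\omega_1=\omega_1^L$, preserved because the iteration is ccc over $L$, is exactly what makes the correctness clause $\Pi^1_2$ rather than more complex.

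I expect the main obstacle to be showing that the coding is simultaneously \emph{complete} and \emph{sound} in the presence of the $\MA$-iteration. Completeness — that every real, including the cofinally many reals the $\MA$-part keeps adding, eventually gets coded into $A$ — is a bookkeeping and cofinality matter. Soundness is the genuinely delicate point: one must guarantee that the many ccc posets forced for $\MA$ never accidentally create a spurious witnessing model $M$ that makes the $\Sigma^1_3$ definition fire at the wrong place, i.e.\ that no side effect of the $\MA$-forcing corrupts the decoding. This requires choosing the almost disjoint families and the coding slots so that the coded information is rigid against arbitrary ccc extensions, and arguing, via a condensation/reflection argument on the $L$-levels witnessing the code, that the $\Pi^1_2$ correctness clause can only be met by genuinely correct models. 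Making these two requirements cohere through an $\omega_2$-length iteration, while never leaving the ccc class, is the crux of the construction.
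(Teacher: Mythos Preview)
Your outline correctly identifies the target and, importantly, singles out \emph{soundness} as the crux: the $\MA$-part of the iteration must not manufacture fake decoding witnesses. But you do not actually provide a mechanism that guarantees this, and the paper's solution is architecturally quite different from what you sketch. The paper does \emph{not} work with a single ccc iteration over $L$. It first passes to a preparatory model $W_0$ obtained by two rounds of \emph{non}-ccc proper forcing: a $\sigma$-closed product adding an $\omega_2$-sequence $\vec T$ of Suslin trees, followed by club-shooting (an $S$-proper, $\omega$-distributive forcing) that writes each $T_\alpha$ into the stationary/nonstationary pattern of a fixed $L$-definable sequence of stationary sets. Only then does the ccc iteration for $\MA$ begin, and reals are coded not into a set $A\subset\omega_1$ via almost disjoint coding alone, but by the dichotomy ``$T_{\gamma\cdot\omega+n}$ has a branch'' versus ``$T_{\gamma\cdot\omega+n}$ is special''. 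This Suslin-tree coding is what makes soundness provable: branching and specialization are mutually exclusive in any $\aleph_1$-preserving extension, so no further ccc forcing can corrupt a code once it is laid down. Your proposal to ``choose the almost disjoint families and the coding slots so that the coded information is rigid against arbitrary ccc extensions'' is precisely the problem the Suslin-tree machinery is there to solve; you have named the obstacle but not overcome it.

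There is a second, more technical gap. You propose to set $f(x)$ equal to the globally $<_w$-least $y$ with $R(x,y)$ and argue that ``no earlier witness'' is a bounded quantifier over $H_{\omega_2}$. But even granting that $<_w$ is $\Sigma_1(H_{\omega_2},A)$, the clause $\forall y'(y'<_w y\to\neg R(x,y'))$ is a universal quantifier over a $\Pi_1$ matrix (the negation of the $\Sigma_1$ relation $<_w$ appears), so the graph of $f$ comes out $\Pi_1(H_{\omega_2},A)$, not $\Sigma_1$; and a subsequent translation to $\Sigma^1_3$ is unavailable. The paper avoids this by \emph{not} selecting the globally least witness. Instead it introduces a theory $\mathsf T$ whose transitive $\aleph_1$-sized models are uniquely determined by their ordinal height (this uniqueness is exactly what the Suslin-tree coding buys), and defines $f(x)$ to be the $<$-least witness \emph{inside the shortest $\mathsf T$-model $M$ that sees $\exists y\,\varphi(x,y)$}. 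The minimality clauses are then evaluated \emph{inside} $M$, and the whole description becomes ``there exists a real $r$ such that every countable transitive $N$ with $r\in N$ and $\omega_1^N=(\omega_1^L)^N$ decodes from $r$ a $\mathsf T$-model with the required properties'', which is genuinely $\Sigma^1_3$. This selection-inside-a-canonical-model trick, not a global least-witness argument, is the reason the complexity computation closes.
\par
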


In the case of $\BPFA$, the additional assumption that $\omega_1$ is not inaccessible to reals in fact yields a strict implication rather than a consistency result.
\begin{theorem*}
Assume $\BPFA$ and $\omega_1=\omega_1^{L[r]}$ for some real $r$. Then every $\Sigma^1_3$ set in the plane can be uniformized by a $\Sigma^1_3(r)$-definable function. In particular, $\BPFA$ and $\omega_1=\omega_1^L$ implies the $\Sigma^1_3$-uniformization property.
\end{theorem*}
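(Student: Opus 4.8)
The plan is to deduce the $\Sigma^1_3(r)$-uniformization property from the existence of a suitably definable wellordering of the reals, in the spirit of Addison's good-wellorder argument, where the wellordering is now extracted from $\BPFA$ together with the hypothesis $\omega_1=\omega_1^{L[r]}$. The backbone of the construction is the observation, due to Caicedo and Veli\v{c}kovi\'{c}, that $\BPFA$ produces a wellordering $<$ of $H_{\omega_2}$, and hence of $2^\omega$, that is $\Sigma_1$-definable over $(H_{\omega_2},\in)$ from a parameter $A\subseteq\omega_1$. The mechanism is a ladder-system coding: one fixes a coherent sequence of ladders on $\omega_1$ and, appealing to Bagaria's characterization of $\BPFA$ as $\Sigma_1$-absoluteness for $H_{\omega_2}$ under proper forcing, arranges that every real is written into the pattern of $A$ along a club of countable ordinals. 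Here $\BPFA$ does the essential work of making the coding \emph{total}: it guarantees that \emph{every} real is captured, so that $<$ is a genuine wellordering of all of $2^\omega$ rather than of a fragment.

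The central step is to convert this $\Sigma_1(H_{\omega_2})$-definition into a projective one, and this is exactly where $\omega_1=\omega_1^{L[r]}$ is used. A $\Sigma_1(H_{\omega_2})$-assertion $\varphi(x)$ holds if and only if there is a countable well-founded model $M$, coded by a real, such that $M\models\varphi(x)$ and $M$ is \emph{correct} about the initial segments $A\cap\alpha$ of the coding parameter and about the ordinals below its own $\omega_1$. Well-foundedness of $M$ is $\Pi^1_1$, and the point of the anti-large-cardinal hypothesis is that correctness becomes projectively checkable: since $\omega_1=\omega_1^{L[r]}$, the ordinal $\omega_1$ is correctly computed by $L[r]$, every countable ordinal is coded by a real of $L[r]$, and membership in $L[r]$ together with its canonical wellordering is $\Sigma^1_2(r)$; thus ``$M$ computes $A\cap\alpha$ correctly'' reduces to a $\Sigma^1_2(r)$ comparison of $M$ with $L[r]$. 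Quantifying existentially over the real coding $M$ then places $<$ at the level $\Sigma^1_3(r)$, and the uniqueness of the least correct witnessing model yields the matching $\Pi^1_3(r)$-definition.

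With a $\Sigma^1_3(r)$-wellordering available, uniformization is obtained by selecting, for a given $\Sigma^1_3$ set $B$ in the plane and each $x\in pr_1(B)$, the $<$-least $y$ with $(x,y)\in B$. The subtlety, and the point that departs from the classical picture, is that $\BPFA$ refutes $\CH$, so $<$ has ordertype $\omega_2$ and its proper initial segments are uncountable; consequently the naive least-witness formula is only $\Sigma^1_3\wedge\Pi^1_3$, i.e. $\Delta^1_4$, and the usual goodness device of coding all predecessors by a single real is unavailable. The resolution is to exploit that under $\omega_1=\omega_1^{L[r]}$ the wellordering is \emph{locally} good: the restriction of $<$ to the section $B_x=\{y:(x,y)\in B\}$ is analyzed through the $L[r]$-hierarchy, so that ``$y$ is the $<$-least element of $B_x$'' is witnessed by a countable correct model in which leastness is decided, and this brings the comparison back down to the $\Sigma^1_3(r)$ level, in the style of the leftmost-branch selection underlying the classical tree-based uniformization theorems.

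The main obstacle I anticipate is precisely this last reconciliation: showing that uniformization, and not merely the existence of a definable wellordering, can be carried out at the third projective level in the absence of $\CH$. Concretely, one must verify that the local certificates witnessing ``$x$ is coded by $A$ below $\alpha$'' and ``$y$ is $<$-least in $B_x$'' are absolute between $V$ and the correct countable models---this is where the $\Sigma_1$-absoluteness furnished by $\BPFA$ must be married to the $L[r]$-definability of $\omega_1$---and that every step of the complexity bookkeeping lands at $\Sigma^1_2(r)$ or $\Pi^1_2(r)$ inside the models, since any slippage propagates to a $\Sigma^1_4$ bound and defeats the theorem. Once this is in place, the final clause follows immediately: under $\omega_1=\omega_1^L$ one may take $r$ recursive, the entire construction becomes lightface $\Sigma^1_3$, and the $\Sigma^1_3$-uniformization property holds outright.
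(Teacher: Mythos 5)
Your overall route is the same as the paper's: extract the Caicedo--Velickovic coding from $\BPFA$, use $\omega_1=\omega_1^{L[r]}$ to localize $\Sigma_1$-over-$H(\omega_2)$ definitions to countable models that compute $L[r]$ correctly (David's trick), and then uniformize by a least-witness selection. The genuine gap is in the selection step. You define $f(x)$ to be the \emph{globally} $<$-least $y$ with $(x,y)\in B$, you correctly diagnose that this naive rule only yields a $\Sigma^1_3\wedge\Pi^1_3$, i.e.\ $\Delta^1_4$, definition, but your proposed repair --- that ``$y$ is the $<$-least element of $B_x$'' is ``witnessed by a countable correct model in which leastness is decided'' --- cannot work as stated. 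Global leastness asserts that no $z<y$ satisfies the $\Sigma^1_3$ condition $(x,z)\in B$, and no countable (nor even $\aleph_1$-sized transitive) model can certify such a $\Pi^1_3$-type fact: a witness $z<y$ with $(x,z)\in B$ may exist in $V$ while being invisible to the model, since $\Pi^1_3$ statements are not upward absolute from such models. Moreover, a certificate of the form ``there exists a correct model deciding leastness'' is not obviously single-valued: different models contain different reals and would select different ``least'' witnesses, so without more structure the defined relation need not be a function.

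The paper's fix changes the selection rule rather than the certification. It first proves a uniqueness theorem: transitive models of the theory $\TC$ are determined by their ordinal height, so they form a canonical hierarchy below $H(\omega_2)$. Then $f(x)$ is defined as the witness that is $<$-least \emph{inside the least $\TC$-model} (least in the hierarchy) containing $x$ which sees any witness at all. Single-valuedness comes from uniqueness-by-height; the fact that the selected $y$ really satisfies $\varphi(x,y)$ in $V$ comes from Shoenfield absoluteness (these models contain all countable ordinals); and the paper explicitly abandons the demand that $y$ be globally $<$-least --- that abandonment is exactly what keeps the definition at $\Sigma^1_3(r)$. A secondary bookkeeping problem in your write-up points to the same confusion: a certificate of the form ``$\exists$ well-founded $M$, $\Pi^1_1$-wellfounded and $\Sigma^1_2(r)$-correct, with $M\models\varphi$'' computes out to $\Sigma^1_2(r)$, which is impossible under $\BPFA$ (a $\Sigma^1_2(r)$ wellorder would put all reals into $L[r]$ and hence imply $\CH$); the localization must have the David's-trick shape $\exists s\,\forall N(\dots)$, where $s$ almost-disjointly codes an actual $\aleph_1$-sized $\TC$-model of $V$ and one quantifies universally over all suitable countable $N$ decoding $s$ --- this is what produces the $\Sigma^1_3(r)$ bound and what makes truth transfer from the coded model to $V$.
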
 

We end the introduction with a short description of how the article is organized. Section 1.1 introduces the forcings we will use, section 1.2 and 1.3 are concerned with producing a suitable ground model which is akin to a similar model used in \cite{Ho2}, over which we will start the iteration which shall eventually produce a model for the first main theorem, which is proved in 1.4.

The second section is devoted to the proof of the second main theorem; its proofs are very similar to the according ones in section 1. 

\subsection{Notation}
The notation we use will be mostly standard, we hope. We write $\forceP=(\forceP_{\alpha} \, : \, \alpha < \gamma)$ for a forcing iteration of length $\gamma$ with initial segments $\forceP_{\alpha}$. The $\alpha$-th factor of the iteration will be denoted with $\forceP(\alpha)$. When we feel that there is no danger of confusion we tend to drop the dot on $\forceP(\alpha)$, even though $\forceP(\alpha)$ is in fact a $\forceP_{\alpha}$-name of a partial order. 

If $T$ is a tree then we often confuse $T$ with the partial order which comes from the tree order. When forcing with this partial order $T$ we will generically add a cofinal branch through the tree $T$.

We also sometimes write $V[\forceP]\models \varphi$ to indicate that for every $\forceP$-generic filter $G$ over $V$, $V[G] \models \varphi$.

\subsection{The forcings which are used}
The forcings which we will use in the construction are all well-known. We nevertheless briefly introduce them and their main properties. 

\begin{definition}
 For a stationary $R \subset \omega_1$ the club-shooting forcing for $R$, denoted by $\forceP_R$ consists
 of conditions $p$ which are countable functions from $\alpha+1 <\omega_1$ to $R$ whose image is a closed set. $\forceP_R$ is ordered by end-extension.
 \end{definition}
The club shooting forcing $\forceP_R$ is the paradigmatic example for an $R$-\emph{proper forcing}, where we say that $\forceP$ is $R$-proper if and only if for every condition $p \in \forceP$, every $\theta > 2^{| \forceP|}$ (we will utilize the common jargon and say in that situation that $\theta$ is sufficiently large) and every countable $M \prec H(\theta)$ such that $M \cap \omega_1 \in R$ and $p, \forceP \in M$, there is a $q<p$ which is $(M, \forceP)$-generic; and a condition $q \in \forceP$ is said to be $(M,\forceP)$-generic if $q \Vdash ``\dot{G} \cap M$ is an $M$-generic filter$"$, for $\dot{G}$ the canonical name for the generic filter.  See also \cite{Goldstern}. 
\begin{lemma}
 Let $R\subset \omega_1$ be stationary, co-stationary. Then the club-shooting forcing $\forceP_R$ generically adds a club through the stationary set $R \subset \omega_1$. Additionally $\forceP_R$ is $R$-proper, $\omega$-distributive and
 hence $\omega_1$-preserving. Moreover $R$ and all its stationary subsets remain stationary in the generic extension. 
\end{lemma}
\begin{proof}
We shall just show the $\omega$-distributivity of $\forceP_R$, the rest can be found in \cite{Goldstern}, Fact 3.5, 3.6 and Theorem 3.7. Let $p \in \forceP_R$ and $\dot{x}$ be such that $p \Vdash \dot{x} \in 2^{\omega}$. Without loss of generality we assume that $\dot{x}$ is a nice name for a real, i.e. given by an $\omega$-sequence of $\forceP_R$-maximal antichains. We shall find a real $x$ in the ground model and a condition $q < p$ such that $q \Vdash \dot{x}=x$. For this, fix $\theta > 2^{|\forceP|}$ and a countable elementary submodel $M \prec H(\theta)$ which contains $\forceP$, $\dot{x}$ and $p$ as elements and which additionally satisfies that $M \cap \omega_1  \in R$. Note that we can always assume that such an $M$ exists by the stationarity of $R$. We recursively construct a descending sequence $(p_n)_{n \in \omega} \subset M$ of conditions below $p=p_0$ such that every $p_n$ decides the value of $\dot{x}(n)$ and such that both sequences $(dom(p_n))_{n \in \omega}$ and $(max \, range(p_n))_{n \in \omega}$  converge to $M \cap \omega_1$.
We let $x(n) \in 2$ be the value of $\dot{x}$ as forced by $p_n$, and let $x= (x(n))_{n \in \omega} \in 2^{\omega} \cap V$.

Let $q'=\bigcup_{n \in \omega} p_n \subset (M \cap \omega_1)$. We set $q:= q' \cup \{ ((M \cap \omega_1), (M \cap \omega_1)) \}$, which is a function from $(M \cap \omega_1)+1$ to $R$ with closed image, and hence a condition in $\forceP_R$ which forces that $\dot{x} =x$ as desired.

\end{proof}

We will choose a family of $R_{\beta}$'s so that we can shoot an arbitrary pattern of clubs through its elements such that this pattern can be read off
from the stationarity of the $R_{\beta}$'s in the generic extension.
For that it is crucial to recall that for stationary, co-stationary  $R \subset \omega_1$, $R$-proper posets can be iterated with countable support and always yield an $R$-proper forcing again. This is proved exactly as in the well-known case for plain proper forcings (see \cite{Goldstern}, Theorem 3.9 and the subsequent discussion).
\begin{fact}
Let $R \subset \omega_1$ be stationary, co-stationary. Assume that $(\forceP_{\alpha} \, : \, \alpha< \gamma)$ is a countable support iteration of length $\gamma$, let $\forceP_{\gamma}$ denote the resulting partial order and assume also that at every stage $\alpha$, $\forceP_{\alpha} \Vdash  \dot{\forceP}({\alpha})$ is $R$-proper. Then $\forceP_{\gamma}$ is $R$-proper.
\end{fact}

Once we decide to shoot a club through a stationary, co-stationary subset of $\omega_1$, this club will belong to all $\omega_1$-preserving outer models. This hands us a robust method of coding arbitrary information into a suitably chosen sequence of sets
which has been used several times already (see e.g. \cite{SyVera}).
\begin{lemma}\label{coding with stationary sets}
 Let $(R_{ \alpha} \, : \, \alpha < \omega_1)$ be a partition of $\omega_1$ into $\aleph_1$-many stationary sets, let  $r \in 2^{\omega_1}$ be arbitrary, and let $\forceP$ be a countable support iteration $(\forceP_{\alpha} \, : \, \alpha < \omega_1)$, inductively defined via \[\forceP(\alpha) := \dot{\forceP}_{\omega_1 \backslash R_{2 \cdot \alpha}} \text{ if } r(\alpha)=1 \] and
 \[\forceP({\alpha}) := \dot{\forceP}_{\omega_1 \backslash R_{(2 \cdot\alpha) +1}} \text{ if } r(\alpha)=0.\]
 Then in the resulting generic extension $V[\forceP]$, we have that $\forall \alpha < \omega_1:$ \[ r(\alpha)=1 \text{ if and only if }
 R_{2 \cdot \alpha} \text{  is nonstationary, }\]  and \[ r_{\alpha}=0 \text{ iff } R_{(2 \cdot \alpha)+1} \text{ is nonstationary.} \]
\end{lemma}

\begin{proof}
Assume first without loss of generality that $r(0)=1$, then the first factor of the iteration will be $\forceP_{\omega_1 \backslash R_0}$ and is $\omega_1 \backslash R_0$-proper. Now note that $\omega_1 \backslash R_0 \supset R_1$, so $\forceP_{\omega_1 \backslash R_0} $  is $R_1$-proper as well. As the $R_{\alpha}$'s form a partition, $\forceP({\alpha})$ for $\alpha \ge 1$ will always contain $R_1$, hence the iteration is $R_1$-proper, hence $\omega_1$ preserving. 

Now let $\alpha < \omega_1$ be arbitrary and assume that $r(\alpha)=1$. Then by definition of the iteration
 we must have shot a club through the complement of $R_{2 \alpha}$, thus it is nonstationary in $V[{\forceP}]$ as claimed. 
 
On the other hand, if $R_{2 \alpha}$ is nonstationary in $V[{\forceP}]$, then we assume for a contradiction that we did not use $\forceP_{\omega_1 \backslash R_{ 2 \cdot \alpha}}$ in the iteration $\forceP$.
Note that for $\beta \ne 2 \cdot \alpha$, every forcing of the form $\forceP_{\omega_1 \backslash R_{\beta}}$ is $R_{2 \cdot \alpha}$-proper as $\forceP_{\omega_1 \backslash R_{\beta}}$ is $\omega_1 \backslash R_{\beta}$-proper and $R_{2\cdot \alpha} \subset \omega_1 \backslash R_{\beta}$.
 Hence the iteration $\forceP$ will  be $R_{2 \cdot \alpha}$-proper, thus the stationarity of $R_{2 \cdot \alpha}$ is preserved. But this is a contradiction.

 \end{proof}

The second forcing we use is the almost disjoint coding forcing due to R. Jensen and R. Solovay. We will identify subsets of $\omega$ with their characteristic function and will use the word reals for elements of $2^{\omega}$ and subsets of $\omega$ respectively.
Let $D=\{d_{\alpha} \, \: \, \alpha < \aleph_1 \}$ be a family of almost disjoint subsets of $\omega$, i.e. a family such that if $r, s \in D$ then 
$r \cap s$ is finite. Let $X\subset  \kappa$ for $\kappa \le 2^{\aleph_0}$ be a set of ordinals. Then there 
is a ccc forcing, the almost disjoint coding $\mathbb{A}_D(X)$ which adds 
a new real $x$ which codes $X$ relative to the family $D$ in the following way
$$\alpha \in X \text{ if and only if } x \cap d_{\alpha} \text{ is finite.}$$
\begin{definition}
 The almost disjoint coding $\mathbb{A}_D(X)$ relative to an almost disjoint family $D$ consists of
 conditions $(r, R) \in \omega^{<\omega} \times D^{<\omega}$ and
 $(s,S) < (r,R)$ holds if and only if
 \begin{enumerate}
  \item $r \subset s$ and $R \subset S$.
  \item If $\alpha \in X$ and $d_{\alpha} \in R$ then $r \cap d_{\alpha} = s \cap d_{\alpha}$.
 \end{enumerate}
\end{definition}
For the rest of this paper we let $D \in L$ be the definable almost disjoint family of reals one obtains when recursively adding the $<_L$-least real to the family which is almost disjoint from all the previously picked reals. 
Whenever we use almost disjoint coding forcing, we assume that we code relative to this fixed almost disjoint family $D$.

The last two forcings we briefly discuss are Jech's forcing for adding a Suslin tree with countable conditions and, given a Suslin tree $T$, the associated forcing which adds a cofinal branch through $T$. 
Recall that a set theoretic tree $(T, <)$ is a Suslin tree if it is a normal tree of height $\omega_1$
and has no uncountable antichain. As a result, forcing with a Suslin tree $S$, where conditions are just nodes in $S$, and which we always denote with $S$ again, is a ccc forcing of size $\aleph_1$. 
Jech's forcing to generically add a Suslin tree is defined as follows.

\begin{definition}
 Let $\forceP_J$ be the forcing whose conditions are
 countable, normal trees ordered by end-extension, i.e. $T_1 < T_2$ if and only
 if $\exists \alpha < \text{height}(T_1) \, T_2= \{ t \upharpoonright \alpha \, : \, t \in T_1 \}$
\end{definition}
It is wellknown that $\forceP_J$ is $\sigma$-closed and
adds a Suslin tree. In fact more is true, the generically added tree $T$ has 
the additional property that for any Suslin tree $S$ in the ground model
$S \times T$ will be a Suslin tree in $V[G]$. This can be used to obtain a robust coding method (see also \cite{Ho} for more applications)
\begin{lemma}
 Let $V$ be a universe and let $S \in V$ be a Suslin tree. If $\forceP_J$ is 
 Jech's forcing for adding a Suslin tree and if $T$ is the generic tree
 then $$V[\forceP_{J}][T] \models  S \text{ is Suslin.}$$
\end{lemma}

\begin{proof}
Let $\dot{T}$ be the $\forceP_J$-name for the generic Suslin tree. We claim that $\forceP_J \ast \dot{T}$ has a dense subset which is $\sigma$-closed. As $\sigma$-closed forcings will always preserve ground model Suslin trees, this is sufficient. To see why the claim is true consider the following set:
$$\{ (p, \check{q}) \, : \, p \in \forceP_J \land height(p)= \alpha+1  \land  \check{q} \text{ is a node of $p$ of level } \alpha \}.$$
It is easy to check that this set is dense and $\sigma$-closed in $\forceP_J \ast \dot{T}$.

\end{proof}

A similar observation shows that a we can add an $\omega_1$-sequence of
such Suslin trees with a countably supported iteration.

\begin{lemma}\label{ManySuslinTrees}
 Let $S$ be a Suslin tree in $V$ and let $\forceP$ be a countably supported
 product of length $\omega_1$ of forcings $\forceP_J$ with $G$ its generic filter. Then in
 $V[G]$ there is an $\omega_1$-sequence of Suslin trees $\vec{T}=(T_{\alpha} \, : \, \alpha \in \omega_1)$ such
that for any finite $e \subset \omega$
the tree $S \times \prod_{i \in e} T_i$ will be a Suslin tree in $V[G]$.
\end{lemma}

These sequences of Suslin trees will be used for coding in our proof and get a name.
\begin{definition}
 Let $\vec{T} = (T_{\alpha} \, : \, \alpha < \kappa)$ be a sequence of Suslin trees. We say that the sequence is an 
 independent family of Suslin trees if for every finite set $e= \{e_0, e_1,...,e_n\} \subset \kappa$ the product $T_{e_0} \times T_{e_1} \times \cdot \cdot \cdot \times T_{e_n}$ 
 is a Suslin tree again.
\end{definition}
\subsection{The ground model $W$}

We have to first create a suitable ground model $W$ over which the actual iteration will take place. The construction is inspired by \cite{Ho2}, where a very similar universe is used as well. $W$ will be a generic extension of $L$, satisfying $\CH$ and has the crucial property that in $W$ there is an $\omega_2$-sequence $\vec{S}$ of independent Suslin trees which is $\Sigma_1(\omega_1)$-definable over $H(\omega_2)^W$. The sequence $\vec{S}$ will enable a coding method which is independent of the surrounding universe, a feature we will exploit to a great extent in the upcoming.

To form $W$, we start with G\"odels constructible universe $L$ as our 
ground model. Recall that $L$ comes equipped with a $\Sigma_1$-definable, global well-order $<_L$ of its elements.
We first fix an appropriate sequence of stationary, co-stationary subsets of $\omega_1$ using Jensen's $\diamondsuit$-sequence.
\begin{fact}
In $L$ there is a sequence $(a_{\alpha} \, : \, \alpha < \omega_1)$ of countable subsets of $\omega_1$
such that any set $A \subset \omega_1$ is guessed stationarily often by the $a_{\alpha}$'s, i.e.
$\{ \alpha < \omega_1 \, : \, a_{\alpha}= A \cap \alpha \}$ is a stationary and co-stationary subset of $\omega_1$. The sequence $(a_{\alpha} \, : \, \alpha < \omega_1)$ can be defined in a $\Sigma_1$ way over the structure $L_{\omega_1}$.
\end{fact}
\begin{proof}
We shall only prove the claim about the $\Sigma_1$-definability and follow Jensen's original construction of the $\diamondsuit$-sequence. We define a sequence of pairs $(a_{\alpha}, c_{\alpha})$ by induction on $\alpha$. If $\alpha=\beta+1$, then $a_{\alpha}=c_{\alpha}=\alpha$. If $\alpha$ is a limit ordinal, then $(a_{\alpha},c_{\alpha})$ is the $<_L$-least pair such that $c_{\alpha}$ is a closed and unbounded subset of $\alpha$, $a_{\alpha} \subset \alpha$ and such that $a_{\alpha} \cap \eta \ne a_{\eta}$ for every $\eta \in c_{\alpha}$, provided such a pair exists. Otherwise let $a_{\alpha}= c_{\alpha}=\alpha$. It is well-known that the $a_{\alpha}$'s defined this way form a $\diamondsuit$-sequence. We let $\phi(\alpha,x)$ denote the statement: ``$x$ is the $\alpha$-th entry of the $\diamondsuit$-sequence defined as above$"$.

Now it is straightforward to check that $L_{\omega_1}$ is sufficient to correctly compute the sequence $((a_{\alpha},c_{\alpha}) \, : \, \alpha < \omega_1)$ in a $\Sigma_1$-way.
Indeed $L_{\omega_1}$ can correctly compute $L_{\beta}$, for $\beta < \omega_1$ with a $\Sigma_1$-formula. The latter structures, provided $\beta$ is a limit ordinal, are able to define the $<_L$-wellorder up to their respective ordinal height. Thus if the countable $L_{\beta}$, $\beta$ a limit ordinal, contains $((a_{\alpha},c_{\alpha} \, : \, \alpha < \gamma)$, for some $\gamma < \beta$, then $L_{\beta}$ will correctly compute $(a_{\gamma},c_{\gamma})$ as $<_L$ and being closed and unbounded in some $\alpha<\beta$ are absolute notions between $L_{\beta}$ and $L$. Consequentially, being $a_{\alpha}$ is  $\Sigma_1(\alpha)$-definable over $L_{\omega_1}$
\begin{align*}
x= a_{\alpha} \Leftrightarrow \exists \beta (& \beta \text{ is a limit ordinal and }
L_{\beta} \models \phi(\alpha,x)
\end{align*}
and $x \in \{ a_{\alpha} \, : \, \alpha < \omega_1\}$ if and only if $\exists \alpha (x=a_{\alpha})$, which gives the claim.
\end{proof}
 
The $\diamondsuit$-sequence can be used to produce an easily definable sequence of $L$-stationary, co-stationary subsets of $\omega_1$: we list the elements of $L_{\omega_2}$ in an $\omega_2$ sequence $(r_{\alpha} \, : \, \alpha < \omega_2)$. 
We fix \[R:= \{\alpha < \omega_1 \, : \, a_{\alpha}= r_0 \cap \alpha \} \] and note that $R$ is stationary and co-stationary. 

Then we define for every $\beta < \omega_2$, $\beta \ne 0$
a stationary, co-stationary set in the following way:
\[R'_{\beta} := \{ \alpha < \omega_1 \, : \, a_{\alpha}= {r}_{\beta} \cap \alpha \}\]
and 
\[R_{\beta} := \{ \alpha < \omega_1 \, : \, a_{\alpha}= {r}_{\beta} \cap \alpha \} \backslash R.\]
 It is clear that $\forall \alpha \ne \beta (R_{\alpha} \cap R_{\beta} \in \hbox{NS}_{\omega_1})$ and that $R_{\beta} \cap R=\emptyset$. 
To avoid writing $\beta \ne 0$ all the time we re-index and confuse $(R_{\beta} \, : \, \beta < \omega_2, \beta \ne 0)$ with $(R_{\beta} \, : \, \beta < \omega_2)$.
We derive the following standard result concerning the definability of the $R_{\beta}$'s:
\begin{lemma}\label{computationofRbetas}
For any $\beta < \omega_2$, membership in $R_{\beta}$ is uniformly $\Sigma_1(\omega_1)$-definable over the model $L_{\omega_2}$, i.e. there is a $\Sigma_1$-formula with $\omega_1$ as a parameter $\psi(v_0,v_1, \omega_1)$ such that for every $\beta < \omega_2$,
$(\alpha \in R_{\beta} \Leftrightarrow L_{\omega_2} \models \psi(\alpha, \beta,\omega_1))$. 
\end{lemma}
\begin{proof}
First we note that there is a $\Sigma_1(\omega_1)$-formula $\theta'(\eta, x,\omega_1)$ for which 
$L_{\omega_2} \models \theta'(\eta,x,\omega_1)$ is true if and only if ``$x$ is the $\eta$-th subset of $\omega_1$ in $<_L$, the canonical $L$-wellorder$"$. It follows that there is a $\Sigma_1(\omega_1)$-formula $\theta(\eta,\zeta,x,\omega_1)$ for which $L_{\omega_2} \models \theta(\eta, \zeta,x,\omega_1)$ is true if and only if ``$x$ equals ${r}_{\eta} \cap \zeta"$. Further recall that in the proof of the last lemma we found already a $\Sigma_1$-formula, let us denote it with $\varphi(\xi,y)$, such that $L_{\omega_1} \models \varphi(\xi,y)$ holds if and only if ``$y$ is the $\xi$-th element of the canonical $\diamondsuit$-sequence$"$.

Then membership in $R'_{\beta}$ can be expressed using the following formula:
\begin{align*}
\alpha \in R'_{\beta} \Leftrightarrow L_{\omega_2} \models   \exists x( \varphi(\alpha,x) \land \theta(\beta, \alpha,x,\omega_1))
\end{align*}
Note here that actually every $\aleph_1$-sized $L_{\gamma}$, for $\gamma$ a limit ordinal, which models (a sufficiently big fragment of) $\ZFP$ and contains $\alpha$ and $\beta$ is sufficient to witness membership of $\alpha$ in $R'_{\beta}$ using the formula $\exists x( \varphi(\alpha,x) \land \theta(\beta, \alpha,x,\omega_1))$. In particular, not being an element of $R$ can be written in a $\Sigma_1(\omega_1)$ fashion as well:
\begin{align*}
\alpha \notin R \Leftrightarrow L_{\omega_2} \models \exists \gamma >\omega_1 (L_{\gamma} \models  \ZFP \land \exists x( \varphi(\alpha,x) \land \lnot \theta(0, \alpha,x,\omega_1))
\end{align*}

It follows that membership in $R_{\beta}$ allows this representation:
\begin{align*}
\alpha \in R_{\beta} \Leftrightarrow L_{\omega_2} \models &  \exists x( \varphi(\alpha,x) \land \theta(\beta, \alpha,x,\omega_1)) \land \\&
\qquad \exists \gamma > \omega_1( L_{\gamma} \models ``\ZFP \land \ \\& \qquad \qquad \qquad \qquad \quad
 \exists y \, (\varphi(\eta, y) \land \lnot \theta (0,\alpha,y,\omega_1)" ))
\end{align*}
Note that the last formula is $\Sigma_1(\omega_1)$, thus we found our desired $\psi(v_0,v_1,\omega_1)$.
\end{proof}

We proceed with defining the universe $W$.
First we generically add $\aleph_2$-many Suslin trees using of Jech's Forcing $ \forceP_J$. We let 
\[\forceQ^0 := \prod_{\beta \in \omega_2} \forceP_J \] using countable support. This is a $\sigma$-closed, hence proper notion of forcing. We denote the generic filter of $\forceQ^0$ with $\vec{S}=(S_{\alpha} \, : \, \alpha < \omega_2)$ and note that by Lemma \ref{ManySuslinTrees} $\vec{S}$ is independent.  We fix a definable bijection between $[\omega_1]^{\omega}$ and $\omega_1$ and identify the trees in $(S_{\alpha }\, : \, \alpha < \omega_1)$ with their images under this bijection, so the trees will always be subsets of $\omega_1$ from now on.

In a second step we code the trees from $\vec{S}$ into the sequence of $L$-stationary subsets $\vec{R}$ we produced earlier, using the method introduced in Lemma \ref{coding with stationary sets}. It is important to note, that the forcing we are about to define does preserve Suslin trees, a fact we will show later.
The forcing used in the second step will be denoted by $\mathbb{Q}^1$ and will itself be a countable support iteration of length $\omega_1$ whose factors are itself countably supported iterations of length $\omega_1$. Fix first a definable bijection $h \in L_{\omega_3}$ between $\omega_1 \times \omega_2$ and $\omega_2$ and write $\vec{R}$ from now on in ordertype $\omega_1 \cdot \omega_2$ making implicit use of $h$, so we assume that $\vec{R}= (R_{\alpha} \, : \, \alpha < \omega_1 \cdot \omega_2)$. 

$\forceQ^1$ is defined via induction in the ground model $L[\forceQ^0]$. Assume we are at stage $\alpha < \omega_2$ and we have already created the iteration $\forceQ^1_{\alpha}$ up to stage $\alpha$. We work with $L[\forceQ^0][\forceQ^1_{\alpha}]$ as our ground model
 and consider the Suslin tree $S_{\alpha} \subset \omega_1$. 
 We define the forcing we want to use at stage $\alpha$, denoted by $\forceQ^1 (\alpha)$, as the countable support iteration which codes the characteristic function of $S_{\alpha}$ into the $\alpha$-th $\omega_1$-block of the $R_{\beta}$'s just as in Lemma \ref{coding with stationary sets}. So $\forceQ^1 (\alpha)= \bigstar_{\gamma < \omega_1} \forceR_{\alpha}(\gamma)$ is again a countable support iteration in $L[\forceQ^0][\forceQ^1_{\alpha}]$, defined inductively via
\[ \forall \gamma < \omega_1 \,(\forceR_{\alpha}(\gamma):= \dot{\forceP}_{\omega_1 \backslash R_{\omega_1 \cdot \alpha + 2 \gamma +1}}) \text{ if } S_{\alpha} (\gamma) =0 \]
and
\[ \forall \gamma < \omega_1 \, (\forceR_{\alpha}(\gamma):= \dot{\forceP}_{\omega_1 \backslash R_{\omega_1 \cdot \alpha + 2 \gamma}}) \text{ if } S_{\alpha} (\gamma) =1. \]

Recall that we let $R$ be a stationary, co-stationary subset of $\omega_1$ which is disjoint from all the $R_{\alpha}$'s which are used.  We let $\mathbb{Q}^1$ be the countably supported iteration, $$\mathbb{Q}^1:=\bigstar_{\alpha< \omega_2} \forceQ^1_{\alpha}$$ which is an $R$-proper forcing. We shall see later that $\forceQ^1$ in fact is $\omega$-distributive, hence the iteration $\forceQ^1$ is in fact a countably supported product.
This way we can turn the generically added sequence of Suslin trees $\vec{S}$ into a definable sequence of Suslin trees.
Indeed, if we work in $L[\vec{S}\ast G]$, where $\vec{S} \ast G$ is $\forceQ^0 \ast \mathbb{Q}^1$-generic over $L$, then, as seen in Lemma \ref{coding with stationary sets} 
\begin{align*}
\forall \alpha< \omega_2, \gamma < \omega_1 (&\gamma \in S_{\alpha} \Leftrightarrow R_{\omega_1 \cdot \alpha + 2 \cdot \gamma} \text{ is not stationary and} \\ &
\gamma \notin S_{\alpha} \Leftrightarrow  R_{\omega_1 \cdot \alpha + 2 \cdot \gamma +1} \text{ is not stationary})
\end{align*}
Note here that the above formula can be used to make every $S_{\alpha}$ $\Sigma_1(\omega_1,\alpha)$ definable over $L[\vec{S} \ast G]$ as is shown with the next lemma.

\begin{lemma}\label{definabilityofvecS}
There is a $\Sigma_1(\omega_1)$-formula $\Phi(v_0,v_1,\omega_1)$ such that every $\gamma < \omega_2$,

\[ L[\vec{S} \ast G] \models \forall \beta < \omega_1 (\Phi (\beta, \gamma,\omega_1) \Leftrightarrow \beta \in S_{\gamma}) \]
 
Thus initial segments  of the sequence $\vec{S} $ are uniformly  $\Sigma_1(\omega_1)$-definable over $L[\vec{S} \ast G]$.
\end{lemma}
\begin{proof}
Let $\gamma < \omega_2$.
We claim that already $\aleph_1$-sized, transitive models of $\ZFP$ which contain a club through the complement of exactly one element of every pair $\{(R_{\alpha}, R_{\alpha+1}) \, : \, \alpha < \omega_1 \cdot \gamma\}$ are sufficient to compute correctly $\vec{S} \upharpoonright \gamma$ via the following $\Sigma_1(\gamma, \omega_1)$-formula: 

\begin{align*}
\Psi(X,\gamma, \omega_1) \equiv  \exists M (&M \text{ transitive } \land M \models \ZFP \land \omega_1,\gamma \in M \land
\\& M \models \forall \beta< \omega_1 \cdot \gamma (\text{either  $R_{2\beta}$ or $R_{2\beta+1}$ is nonstationary) } \land \\& 
M \models X \text{ is an $\omega_1 \cdot \gamma$-sequence $(X_{\alpha})_{\alpha < \omega_1 \cdot \gamma}$ of subsets of $\omega_1$} \land\\&
M \models  \forall \alpha, \delta (\delta \in X_{\alpha} \Leftrightarrow R_{\omega_1 \cdot \alpha + 2 \cdot \delta} \text{ is not stationary and} \\& \qquad \qquad \quad \,
\delta \notin X_{\alpha} \Leftrightarrow  R_{\omega_1 \cdot \alpha + 2 \cdot \delta +1} \text{ is not stationary})
\end{align*}
We want to show that 
\begin{align*}
X=\vec{S} \upharpoonright \gamma \text{ if and only if } \Psi(X,\gamma,\omega_1) \text{ is true in }L[\vec{S} \ast G].
\end{align*}

For the backwards direction, we assume that $M$ is a model and $X \in M$ is a set, as on the right hand side of the above. We shall show that indeed $X=\vec{S}\upharpoonright \gamma$.  As $M$ is transitive and a model of $\ZFP$ it will compute every $R_{\beta}$, $\beta < \omega_1 \cdot \gamma$ correctly by Lemma \ref{computationofRbetas}. As being nonstationary is a $\Sigma_1(\omega_1)$-statement, and hence upwards absolute, we conclude that if $M$ believes to see a pattern written into (its versions of) the $R_{\beta}$'s, this pattern is exactly the same as is seen by the real world $L[\vec{S} \ast G]$. But we know already that in $L[\vec{S} \ast G]$, the sequence $\vec{S}$ is written into the $R_{\beta}$'s, thus $X=\vec{S} \upharpoonright \gamma$ follows.

On the other hand, if $X=\vec{S} \upharpoonright \gamma$, then
\begin{align*}
L[\vec{S} \ast G] \models \forall \beta< \omega_1 \cdot \gamma (\text{either  $R_{2\beta}$ or $R_{2\beta+1}$ is nonstationary) } \\ 
L[\vec{S} \ast G]  \models X \text{ is an $\omega_1 \cdot \gamma$-sequence $(X_{\alpha})_{\alpha < \omega_1 \cdot \gamma}$ of subsets of $\omega_1$}
\end{align*}
and
\begin{align*}
L[\vec{S} \ast G] \models  \forall \alpha, \delta < \omega_1 (\delta  \in X_{\alpha} \Leftrightarrow & R_{\omega_1 \cdot \alpha + 2 \cdot \delta } \text{ is not stationary and} \\ 
\delta  \notin X_{\alpha} \Leftrightarrow&  R_{\omega_1 \cdot \alpha + 2 \cdot \delta +1} \text{ is not stationary})
\end{align*}
By reflection, there is an $\aleph_1$-sized, transitive model $M$ which models the assertions above, which gives the direction from left to right.

\end{proof}

Let us set \[W:= L[\forceQ^0\ast \forceQ^1 ]\] which will serve as our ground model for a second iteration of length $\omega_2$. We shall need the following
well-known result (see \cite{Abraham}, Theorem 2.10 pp.20, or see \cite{SyVera} Lemma 12).
\begin{fact}
Let $R\subset \omega_1$ be stationary and co-stationary.
Assume $\CH$ and let $(\forceP_{\alpha} \, : \, \alpha \le \delta)$ be a
countable support iteration of $R$-proper posets such
that for every $\alpha \le \delta$
\[ \forceP_{\alpha} \Vdash |\forceP(\alpha)| = \aleph_1. \]
Then $\forceP_{\delta}$ satisfies the $\aleph_2$-cc.
\end{fact}

\begin{lemma}
$W$ is an $\omega$-distributive generic extension of $L$ which also satisfies $\aleph_2^L=\aleph_2^W$.
\end{lemma}
\begin{proof}
The second assertion follows immediately from the last Fact. The first assertion holds by the following argument which already will look familiar. First note that as $\forceQ^0$ does not add any reals it is sufficient to show that $\forceQ^1$ is $\omega$-distributive.
Let $p \in \forceQ^1$ be a condition and assume that $p \Vdash \dot{r} \in 2^{\omega}$. We shall find a stronger $q < p$ and a real $r$ in the ground model such that $q \Vdash \check{r}=\dot{r}$. Let $M \prec H(\omega_3)$ be a countable elementary submodel which contains $p, \forceQ^1$ and $\dot{r}$ and such that $M \cap \omega_1 \in R$, where $R$ is our fixed stationary set from above. Inside $M$ we recursively construct a decreasing sequence $p_n$ of conditions in $\forceQ^1$, such that for every $n$ in $\omega,$ $p_n \in M$, $p_n$ decides $\dot{r}(n)$ and for every $\alpha$ in the support of $p_n$, the sequence $sup_{n \in \omega} max( p_n(\alpha))$ converges towards $M \cap \omega_1$ which is in $R$. Now, $q':= \bigcup_{n \in \omega} p_n$ and for every $\alpha< \omega_1$ such that $q'(\alpha)\ne 1$ (where 1 is the weakest condition of the forcing),  in other words for every $\alpha$ in the support of $q'$ we define $q(\alpha):= q'(\alpha) \cup \{((M \cap \omega_1), (M \cap \omega_1))\}$ and $q(\alpha)=1$ otherwise. Then $q=(q(\alpha))_{\alpha < \omega_1}$ is a condition in $\forceQ^1$, as can be readily verified and $q \Vdash \dot{r} = \check{r}$, as desired.

The second assertion follows immediately from the well-known theorem that under $\CH$, a countable support iteration of $R$-proper forcings of size $\aleph_1$,
\end{proof}
Note that by the last lemma, the second forcing $\forceQ^1$ which is an the countably supported iteration of the appropriate club shooting forcings is in fact just a countably supported product of its factors, i.e. at every stage of the iteration we can force with the according $\forceP_R$, as computed in $L$.

Our goal is to use $\vec{S}$ for coding again. For this it is essential, that the sequence remains independent in $W$. To see this we shall argue that forcing with $\mathbb{Q}^1$ over $L[\forceQ^0]$ preserves Suslin trees. 
The following  line of reasoning is similar to \cite{Ho}.
Recall that for a forcing $\forceP$, $\theta$ sufficiently large and regular and $M \prec H(\theta)$, a condition $q \in \forceP$ is $(M,\forceP)$-generic iff for every maximal antichain $A \subset \forceP$, $A \in M$, it is true that $ A \cap M$ is predense below $q$. In the following we will write $T_{\eta}$ to denote the $\eta$-th level of the tree $T$ and $T \upharpoonright \eta$ to denote the set of nodes of $T$ of height $< \eta$.
The key fact is the following (see \cite{Miyamoto2} for the case where $\forceP$ is proper)
\begin{lemma}\label{preservation of Suslin trees}
 Let $T$ be a Suslin tree, $R \subset \omega_1$ stationary and $\forceP$ an $R$-proper
 poset. Let $\theta$ be a sufficiently large cardinal.
 Then the following are equivalent:
 \begin{enumerate}
  \item $\Vdash_{\forceP} T$ is Suslin
 
  \item if $M \prec H_{\theta}$ is countable, $\eta = M \cap \omega_1 \in R$, and $\forceP$ and $T$ are in $M$,
  further if $p \in \forceP \cap M$, then there is a condition $q<p$ such that 
  for every condition $t \in T_{\eta}$, 
  $(q,t)$ is $(M, \forceP \times T)$-generic.
 \end{enumerate}

\end{lemma}

\begin{proof}
For the direction from left to right note first that $\Vdash_{\forceP} T$ is Suslin implies $\Vdash_{\forceP} T$ is ccc, and in particular it is true that for any countable elementary submodel $N[\dot{G}_{\forceP}] \prec H(\theta)^{V[\dot{G}_{\forceP}]}$,  $\Vdash_{\forceP} \forall t \in T (t$ is $(N[\dot{G}_{\forceP}],T)$-generic). Now if $M \prec H(\theta)$ and $M \cap \omega_1 = \eta \in R$ and $\forceP,T \in M$ and $p \in \forceP \cap M$ then there is a $q<p$ such that $q$ is $(M,\forceP)$-generic. So $q \Vdash \forall t \in T (t$ is $(M[\dot{G}_{\forceP}], T)$-generic, and this in particular implies that $(q,t)$ is $(M, \forceP \times T)$-generic for all $t \in T_{\eta}$. 

For the direction from right to left assume that $\Vdash \dot{A} \subset T$ is a maximal antichain. Let $B=\{(x,s) \in \forceP \times T \, : \, x \Vdash_{\forceP} \check{s} \in \dot{A} \}$, then $B$ is a predense subset in $\forceP \times T$. Let $\theta$ be a sufficiently large regular cardinal and let $M \prec H(\theta)$ be countable such that $M \cap \omega_1=\eta \in R$ and $\forceP, B,p,T \in M$. By our assumption there is a $q <_{\forceP} p$  such that $\forall t \in T_{\eta} ((q,t)$ is $(M, \forceP \times T)$-generic). So $B \cap M$ is predense below $(q,t)$ for every $t \in T_{\eta}$, which yields that $q \Vdash_{\forceP} \forall t \in T_{\eta} \exists s<_{T} t(s \in \dot{A})$ and hence $q \Vdash \dot{A} \subset T \upharpoonright \eta$, so $\Vdash_{\forceP} T$ is Suslin.
\end{proof}
We strongly believe that in a similar way, one can show that Theorem 1.3 of \cite{Miyamoto2} holds true if we replace proper by $R$-proper for $R \subset \omega_1$ a stationary subset, i.e. that a countable support iteration of $R$-proper forcings which preserve Suslin trees results in a forcing which preserves Suslin trees. Instead of proving this we will take a more direct route though and prove that $\forceQ^1$ preserves Suslin trees by hand.

\begin{lemma}\label{omegadistributive}
Let $R \subset \omega_1$ be stationary, co-stationary, then the club shooting forcing $\forceP_R$ preserves Suslin trees.
\end{lemma}

\begin{proof}
Let $T$ be an arbitrary Suslin tree from the ground model $V$.
Because of Lemma \ref{preservation of Suslin trees}, it is enough to show that for any regular and sufficiently large
 $\theta$, every $M \prec H_{\theta}$ with $M \cap \omega_1 = \eta \in R$, and every
 $p \in \forceP_R \cap M$ there is a $q<p$ such that for every
 $t \in T_{\eta}$, $(q,t)$ is $(M,(\forceP_R \times T))$-generic.
 Note first that, as $T$ is Suslin, every node $t \in T_{\eta}$ is an
 $(M,T)$-generic condition. Further, as forcing with a Suslin tree
 is $\omega$-distributive, $(H(\omega_1))^{M[G]} = (H(\omega_1))^M$ for every $T$-generic filter $G$ over $V$. As $\forceP_R  \subset H(\omega_1)$, we obtain that the set $M[G] \cap \forceP_R$ is independent of the choice of the generic filter $G$ and equals $M \cap \forceP_R$. Likewise $M[G] \cap \omega_1= M \cap \omega_1$, for every $T$-generic filter.
 
 Next we note that for a countable $M$ and a $V$-generic filter $G \subset T$, 
 the model $M[G]$ is (up to isomorphism) uniquely determined by the $t \in T_{\eta}$, such that $t \in G$ and $\eta= M \cap \omega_1$. This is clear as we can transitively collapse $M[G]$ to obtain a structure of the form $\bar{M}[t]$, where $\bar{M}$ is the image of $M$ under the collapse map and $t \in T_{\eta}$ is the unique node in $T$ to which $G$ is sent to by the collapse map. 
So for a countable $M \prec H(\theta)$, and $\eta= M \cap \omega_1$, we write $M[t]$ for the unique model of the form $M[G]$, for $ G$ $T$-generic over $V$ and $t \in G \cap T_{\eta}$.
With an argument almost identical to the one used in the proof of Lemma 2.2 it is not hard to see that if $M\prec H(\theta)$ is such
 that $M \cap \omega_1 \in R$ then an $\omega$-length descending sequence
 of $\forceP_R$-conditions in $M$ whose domains converge to $M \cap \omega_1$
 has a lower bound as $M \cap \omega_1 \in R$.
 
 We construct an $\omega$-sequence of elements of $\forceP_R$ which has a lower bound
 which will be the desired condition $q$ such that for every $t \in T_{\eta}$, $(q,t)$ is $(M, \forceP_R \times T)$-generic. 
 We list the nodes on $T_{\eta}$, $(t_i \, : \, i \in \omega)$ and
 consider the according generic extensions $M[t_i]$.
 In every $M[t_i]$ we list the $\forceP_R$-dense subsets of $M[t_i]$,
 $(D^{t_i}_n \, : \, n \in \omega)$, write 
 the so listed dense subsets of $M[t_i]$ as an $\omega \times \omega$-matrix and enumerate
 this matrix in an $\omega$-length sequence of dense sets $(D_i \, : \, i \in \omega)$.
 If $p=p_0 \in \forceP_R \cap M$ is arbitrary we can find, using the fact that $\forall i \, (\forceP_R \cap M[t_i] = M \cap \forceP_R$), an $\omega$-length, descending
 sequence of conditions below $p_0$ in $\forceP_R \cap M$, $(p_i \, : \, i \in \omega)$
 such that $p_{i+1} \in M \cap \forceP_R$ is in $D_i$.
 By the usual density argument we can conclude that the domain of the conditions $p_i$ converge to $M[t_i] \cap \omega_1=M \cap \omega_1$.
 Then the $p_i$'s have a lower bound $q=p_{\omega} \in \forceP_R$, namely $q= \bigcup_{i \in \omega} p_i \cup \{(\eta, \eta)\}$  and $(t, q)$ is an
 $(M, T \times \forceP_R)$-generic condition for every $t \in T_{\eta}$ as any $t \in T_{\eta}$ is $(M,T)$-generic
 and every such $t$ forces that $q$ is $(M[T], \forceP_R)$-generic; moreover $q < p$ as 
 desired.
 \end{proof}
The above quickly generalizes to $\forceQ^1$.
\begin{lemma}
The second forcing $\forceQ^1$ we use to produce $W=L[\forceQ^0 \ast \forceQ^1]$ preserves Suslin trees.
\end{lemma}
\begin{proof}
We work over the ground model $L[\forceQ^0]$. Let $T$ be a Suslin tree from $L[\forceQ^0]$. As by definition of $\forceQ^1$, there is an $R \subset \omega_1$ for which $\forceQ^1$ is $R$-proper, we, again, shall show that for any regular and sufficiently large
 $\theta$, every $M \prec H_{\theta}$ with $M \cap \omega_1 = \eta \in R$, and every
 $p \in \forceQ^1 \cap M$ there is a $q<p$ such that for every
 $t \in T_{\eta}$, $(q,t)$ is $(M,(\forceQ^1 \times T))$-generic.
 
We note that by the remark following the proof of Lemma \ref{omegadistributive}, $\forceQ^1$ is equivalent to the countably supported product of its factors. As a consequence of this and the fact that forcing with a Suslin tree is $\omega$-distributive, we obtain that for any $M \prec H(\theta)$,
and any $G$ which is $T$-generic over $L[\forceQ^0]$, $H(\omega_1)^{M[G]}= H(\omega_1)^M$ and as $\forceQ^1 \subset H(\omega_1)^{L[\forceQ^0]}$, it must hold that $\forceQ^1 \cap M[G] = \forceQ^1 \cap M$, and is thus independent of $G$.
 
The proof now follows closely the one of the last Lemma, and uses its notions. 
We start with an arbitrary $p \in \forceQ^1 \cap M$ and need to find a $q<p$ such that $(q,t)$ is $(M, \forceQ^1 \times T)$-generic for every $t \in T_{\eta}$.
We first list all $T_{\eta}$-nodes $(t_i)_{i \in \omega}$. In every $M[t_i]$, $t \in T_{\eta}$, we ist the dense sets $\{D^{t_i}_n \subset \forceQ^1 \, : \, n \in \omega\}$ and re-list the resulting $\omega \times \omega$ matrix of dense subsets in ordertype $\omega$. Now, starting with our condition $p:=p_0$ we form a descending sequence of conditions such that for every $n \in \omega$, $p_n \in D_n$. By density the support of $p_n$ will converge to $M \cap \omega_1=\eta$ as $n$ grows, and likewise the maximum of the domain and the maximum of the range of every coordinate of $p_n$ will converge to $\eta$ as well.
Thus $q:= \bigcup_{n \in \omega} p_n \cup \{ (\eta, \eta, \xi) \, : \, \xi < \eta \}$ is a condition in $\forceQ^1$ below $p$. By construction, every $t \in T_{\eta}$ forces that $q$ is $M[t],\forceQ^1)$-generic, and as forcing with $T$ has the ccc every $t \in T_{\eta}$ is trivially $(M,T)$-generic. As a consequence, $(q,t)$ is $M,\forceQ^1 \times T)$-generic for every $t\in T_{\eta}$ as desired.
\end{proof}
To summarize the main results from above
\begin{theorem}
The universe $W=L[\forceQ^0][\forceQ^1]$ is an $\omega$-distributive, $\aleph_2$-preserving generic extension of $L$ and contains $\vec{S}$ which is an independent sequence of Suslin trees. Moreover, for arbitrary $\alpha < \omega_2$, $\vec{S} \upharpoonright \alpha$ is uniformly $\Sigma_1(\alpha,\omega_1)$-definable over $W$.
\end{theorem}

We end with a straightforward lemma which is used later in coding arguments.

\begin{lemma}\label{a.d.coding preserves Suslin trees}
 Let $T$ be a Suslin tree and let $\mathbb{A}_D(X)$ be the almost disjoint coding which codes
 a subset $X$ of $\omega_1$ into a real with the help of an almost disjoint family
 of reals $D$ of size $\aleph_1$. Then $$\mathbb{A}_{D}(X) \Vdash_{} T \text{ is Suslin }$$
 holds.
\end{lemma}
\begin{proof}
 This is clear as $\mathbb{A}_{D}(X)$ has the Knaster property, thus the product $\mathbb{A}_{D}(X) \times T$ is ccc and $T$ must be Suslin in $V[{\mathbb{A}_{D}(X)}]$. 
\end{proof}

\subsection{Coding over $W$ to produce $\tilde{W}$}
We shall use the just created $W$ to start a second iteration which will code a fresh sequence of Suslin trees $\vec{T}$ into the $W$-definable sequence $\vec{S}$ of Suslin trees. 
The reason for this seemingly redundant choice, is that we later want to force $\MA$, which of course will necessarily create a lot of noise on any definable sequence of independent Suslin trees. This noise is a threat to any intentional coding we aim to build in order to have a universe for the $\Sigma^1_3$-uniformization property.

The addition of the second sequence $\vec{T}$ of independent Suslin trees enables us to later use coding with $\vec{T}$ trees along the usual forcing which produces $\MA$ and yet be in full control of the codes we create on the $\vec{T}$-sequence. This effect can not be reproduced when forcing over just $W$, this is why we need to pass from $W$ to a more suitable generic extension.

The universe $\tilde{W}=W[\forceP_0 \ast \forceP_1]$ is a two step generic extension of $W$. The first factor $\forceP_0$ is just an $\omega_2$-length product of ordinary Cohen forcing with finite support
\[\forceP_0:= \prod_{\alpha < \omega_2} \mathbb{C}. \]
As is known due to S. Shelah (see \cite{Jech}, Theorem 28.12, Lemma 28.13) every generically added Cohen real $r$ can be used to define a new Suslin tree $T_r$. To be more precise, if $V$ is our ground model and
\[ (e_{\alpha} \, : \, \alpha < \omega_1) \] is an $\omega_1$ sequence of functions in $V$ which satisfies
\begin{enumerate}
\item for every $\alpha < \omega_1$, $e_{\alpha}$ is an injection from $\alpha$ to $\omega$;
\item and for every $\alpha < \beta < \omega_1$, $e_{\alpha} (\xi)=e_{\beta} (\xi)$ for all but finitely many $\xi < \alpha$,
\end{enumerate}
then
\[ T_r :=\{ r \circ (e_{\alpha} \upharpoonright \beta) \, : \, \alpha,\beta < \omega_1\} \]
 is a Suslin tree, as long as $r$ is a Cohen real over $V$.
 
 Consequentially, if $(c_{\alpha} \, : \, \alpha < \omega_2)$ denotes the sequence of Cohen reals in $W[\forceP_0]$, the sequence of trees 
 \[\vec{T}:=(T_{\alpha} \, : \, \alpha < \omega_2) \]
 is an independent sequence of Suslin trees. Indeed, any finite product of $T_{\alpha}$'s must be a Suslin tree again, as otherwise, if say $T_1$ and $T_2$ are such that $T_1 \times T_2$ is not Suslin, then $\mathbb{C} \times \mathbb{C} \times T_1 \times T_2$ is not a ccc forcing. But rearranging the factors yields that $T_2$ is a Suslin tree in $V[\mathbb{C}][T_1][\mathbb{C}]$, hence $\mathbb{C} \times T_1 \times \mathbb{C} \times T_2$ has the ccc, which is a contradiction.
 
 Our second forcing $\forceP_1$ uses our already created independent sequence $\vec{S}$ to code up the trees from $\vec{T}$ with a method completely analogous to our forcing $\forceQ^1$, we used in the definition of $W$. That is if we fix an arbitrary $\alpha < \omega_2$ with the according tree $T_{\alpha} \subset \omega_1$, 
and for $\beta < \omega_1$, we let
 \[ \forceP^{\beta}_{1,\alpha}= \begin{cases} S_{ \omega_1 \alpha + \beta} & \text{ if $T_{\alpha} (\beta)=1$ }
 \\
 \hbox{SP} (S_{\omega_1 \alpha+\beta}) & \text{ if $T_{\alpha} (\beta) =0$ }
 \end{cases} \]
where $S_{ \omega_1 \alpha + \beta}$ just denotes the forcing which adds an $\omega_1$-branch through $S_{ \omega_1 \alpha + \beta}$, and SP$(S_{ \omega_1 \alpha + \beta})$ is the forcing which specializes $S_{ \omega_1 \alpha + \beta}$.

Then
\[ \forceP_{1,\alpha} = \prod_{\beta < \omega_1} \forceP^{\beta}_{1,\alpha} \] using finite support. Note that for every $\alpha < \omega_2$, $\forceP_{1,\alpha}$ has the ccc.

Finally we let
\[ \forceP^1 := \prod_{\alpha < \omega_2} \forceP_{1,\alpha} \]
again using finite support. The upshot of these manipulations is, that $\vec{T}$ is now a uniformly definable $\omega_2$-sequence of independent Suslin trees which has a second crucial feature, we discuss later and which is not shared by $\vec{S}$.

We let $\tilde{W}= W[\forceP_0][\forceP_1]$.

\begin{lemma}\label{definabilityofvecT}
The universe $\tilde{W}$ is a cardinal preserving generic extension of $L$, whose continuum is $\aleph_2$. Moreover
there is a $\Sigma_1(\omega_1)$-formula $\Xi(v_0,v_1,\omega_1)$ such that every $\gamma < \omega_2$,

\[ \tilde{W} \models \forall \beta < \omega_1 (\Xi (\beta, \gamma,\omega_1) \Leftrightarrow \beta \in T_{\gamma}) \]
 
Thus initial segments  of the sequence $\vec{T} $ are uniformly  $\Sigma_1(\omega_1)$-definable over $\tilde{W}$.
\end{lemma}
\begin{proof}
The proof of the definability claim is almost identical to the proof of Lemma \ref{definabilityofvecS}, so we omit it. The rest is just a summary of the discussions above.
\end{proof}

\subsection{Iteration over $\tilde{W}$}
Having defined $\tilde{W}$ we start an iteration of length $\omega_2$. In the resulting universe $\MA$ and the $\Sigma^1_3$-uniformization property will be true.

We isolate first a class of transitive models which we take advantage of when showing the $\Sigma^1_3$-uniformization property.

The following is inspired by a similar definition in \cite{CV}:
Let $\T$ be the theory consisting of the following sentences:
\begin{itemize}
\item $\ZFP$,
\item $\forall x (|x| \le \aleph_1)$,
\item Every $X \subset \omega_1$ is coded into a real $r_X$ with the help of the canonical almost disjoint family $D$ of reals from $L$.
\item For every real $r$ there is an ordinal $\gamma$ such that the block $(T_{\gamma \cdot \omega + n}\,  : \, n \in \omega)$ codes $r$ via
\begin{align*}
\forall n \in \omega (& n \in r \Leftrightarrow T_{\omega \gamma + n} \text{ has a cofinal branch} \,  \land \\ &n \notin r \Leftrightarrow T_{\omega \gamma + n} \text{ is special})
\end{align*}
\item For every pair of ordinals $(2\beta, 2\beta+1)$ either $R_{2\beta}$ or $R_{2\beta+1}$, where $S_{\beta}$ is our definable $L$-stationary subset of $\omega_1$, is not stationary.
\item For every ordinal $\gamma$, $S_{\gamma}$ has either a cofinal branch or is special.
\item For every ordinal $\gamma$, $T_{\gamma}$ has either a cofinal branch or is special.
\end{itemize} 
It is important to note that $\tilde{W}$ provides the low-complexity definitions to enable the following absoluteness property between  $\T$-models and generic extensions of $\tilde{W}$:
\begin{lemma}
Let $\tilde{W}[G]$ be a generic extension of $\tilde{W}$ which preserves $\omega_1$. Let $M \in \tilde{W}[G]$ be a transitive $\T$-model  of size $\aleph_1$. Then if $r \in M\cap 2^{\omega}$ and
\begin{align*}
M \models \forall n \in \omega ( n \in r &\Leftrightarrow T_{\omega \gamma + n} \text{ has a cofinal branch} \, \land \\
n \notin r & \Leftrightarrow T_{\omega \gamma + n } \text{ is special})
\end{align*} 
then 
\begin{align*}
\tilde{W}[G] \models \forall n \in \omega ( n \in r &\Leftrightarrow T_{\omega \gamma + n} \text{ has a cofinal branch} \, \land \\
n \notin r & \Leftrightarrow T_{\omega \gamma + n } \text{ is special})
\end{align*} 
Consequentially, the reals which are seen to be coded by an $\aleph_1$-sized, transitive $\T$-model into $\vec{T}$ are coded into $\vec{T}$ in the real world.
\end{lemma}
\begin{proof}
We note first that any transitive, $\aleph_1$-sized $\T$-model $M$ will define the sequence $\vec{T}$ up to $M \cap \omega_2$ correctly as long as it has all the information available. Indeed if $M$ is such that
\begin{itemize}
\item $M \models \ZFP$,
\item $M$ is transitive,
\item $\aleph_1 \in M$, and
\item for every pair of ordinals $(2\beta, 2\beta+1)$ either $R_{2\beta}$ or $R_{2\beta+1}$, where $S_{\beta}$ is our definable $L$-stationary subset of $\omega_1$, is not stationary.
\item For every ordinal $\gamma$, $S_{\gamma}$ has either a cofinal branch or is special.
\end{itemize}
Then, using the $\Sigma_1(\omega_1)$-formula $\Xi(v_0,v_1,\omega_1)$ from Lemma \ref{definabilityofvecT}, $M$ will compute $\vec{T} \upharpoonright (M \cap \omega_2)$ correctly. As $``$having a cofinal branch$"$, and $``$being special$"$ are both $\Sigma_1(\omega_1)$-properties, the pattern $M$ thinks it sees on its version of the $\vec{T}$-sequence must be the patterns which we find in $\tilde{W}[G]$, as otherwise $\omega_1$ would have been collapsed.

\end{proof}
As an immediate consequence, $\T$-models form a well-defined stratification of subsets of $H(\omega_2)$ of $\tilde{W}[G]$, the latter being an $\omega_1$-preserving generic extension of $\tilde{W}$.

\begin{lemma}
Work in a universe $\tilde{W}[G] \supset \tilde{W}$ which satisfies that $\omega_1^{\tilde{W}[G]} =\omega_1^{\tilde{W}}$.
Let $M \in \tilde{W}[G]$ be a transitive model of size $\aleph_1$ which satisfies $\T$. Then $M$ is uniquely determined by $\omega_2 \cap M$.
\end{lemma}
\begin{proof}
Every $\aleph_1$-sized, transitive $\T$-model $M$ is, by the definition of $\T$, completely determined by the reals it contains. As every real in $M$ is coded by an $\omega$-block of Suslin trees $(T_{\omega \gamma +n} \, : \, n \in \omega)$, every $\T$-model is determined by the set of codes which are written on its block of Suslin trees $\vec{T}$. In $\tilde{W}[G]$, there is exactly one such sequence and it can be read of by transitive, $\aleph_1$-sized models $\T$. Thus, for every $\alpha < \omega_2$, there is at most one transitive $\T$-model $M$ such that $M \cap \omega_2= \alpha$.
\end{proof}

Now we return to defining our iteration with $\tilde{W}$ as the ground model.
Two types of forcings are used, one for creating a hierarchy of $\T$-models tied together with a localization forcing, and the second one for working towards $\MA$. We will use finite support and we use a bookkeeping function $F: \omega_2 \rightarrow (\omega_2)^2$ to organize the iteration. The choice of $F$ does not really matter, we will only assume that $F$ is surjective and that for every $(\alpha,\beta) \in (\omega_2)^2$, $F^{-1} (\alpha,\beta)$ is unbounded in $\omega_2$.
The iteration will be defined inductively, suppose we are at a stage $\alpha < \omega_2$ and we have already defined the iteration $\forceP_{\alpha}$ up to $\alpha$ and a generic filter $G_{\alpha}$ for $\forceP_{\alpha}$. We shall define the next forcing $\dot{\forceQ}_{\alpha}$ we want to use next.
\begin{itemize}
\item Suppose $\alpha=\beta+1$ for some ordinal $\beta$. Then we let $\dot{\forceQ}_{\alpha}$ be a two step iteration $\forceR_0 \ast \forceR_1$ where $\forceR_0$ is itself an iteration of length $\omega_1$, whereas $\forceR_1$ will serve as a coding forcing. 
The forcing $\forceR_0$ will produce a new $\T$-model. We force in $\omega_1$-many steps a generic extension $\tilde{W}[G_{\alpha+1}]$ of $\tilde{W}G_{\alpha}]$ such that there is a new, transitive model $M$ of $\T$ of size $\aleph_1$ and height $\beta_{\alpha} < \omega_2$. There are many ways how to achieve this and the definition is not dependent of which one we pick. We let $H(\alpha)$ be a $\forceR_0$-generic filter over $\tilde{W}[G_{\alpha}]$ and define $\forceR_1$ now. Let $X_{\alpha} \subset \omega_1$ be a set which codes $M$. First we note that we can rewrite $X_{\alpha}$ into a new set $Y_{\alpha} \subset \omega_1$ which has the additional property that its information can be read off already by suitable countable transitive models as we will see later.

The considerations which will yield the desired set $Y_{\alpha}$ are as follows. Note first that the set of countable $X \prec L_{\omega_2} [M]$ which contain $M$ give rise to a club 
\[C:= \{ \beta < \omega_1 \, : \, \exists X \prec L_{\omega_2}[M] \land M \in X \land \omega_1 \cap X =\beta \}. \] 
We let $\{c_{\beta} \,: \, \beta < \omega_1\}$ be the enumeration of $C$.
We define $Y_{\alpha} \subset \omega_1$ such that the odd entries code the $X_{\alpha}$ and the even entries $E(Y_{\alpha})$ of $Y_{\alpha}$ satisfiy:
\begin{enumerate}
\item $E(Y_{\alpha}) \cap \omega$ codes a well-ordering of type $c_0$.
\item $E(Y_{\alpha}) \cap [\omega, c_0) = \emptyset$.
\item For all $\beta$, $E(Y_{\alpha}) \cap [c_{\beta}, c_{\beta} + \omega)$ codes a well-ordering of type $c_{\beta+1}$.
\item For all $\beta$, $E(Y_{\alpha}) \cap [c_{\beta}+\omega, c_{\beta+1})= \emptyset$.
\end{enumerate}

In the next step we use almost disjoint forcing $\mathbb{A}_D(Y_{\alpha})$ relative to the $<_L$-least almost disjoint family of reals $D$ to code the set $Y_{\alpha}$ into one real $r_{\alpha}$. The almost disjoint coding forcing shall be our second forcing $\forceR_1$.

\item Suppose that $\alpha$ is a limit ordinal, then force towards obtaining a model for $\MA$. We pick with the help of the bookkeeping function $F$ the according ccc poset $\forceP=F(\alpha)$ of size $\aleph_1$ in a diagonal way. To be more precise, we let $(\beta,\gamma)$ be such that $F(\alpha)=(\beta,\gamma)$ and pick the $\gamma$-th name (in some fixed well-order of $H(\omega_2)$)
of a forcing $\dot{\forceQ}$ in $\tilde{W}^{\forceP_{\beta}}$ which is ccc in $\tilde{W}[G_{\alpha}]$ and which has the additional property that there is a $\delta < \omega_2$ such that if we write $\tilde{W}$ as $W[G^0][G^1]$, then $\dot{\forceQ}$ is actually in $W[G^0_{\delta}][G^1_{\delta}] [G_{\alpha}]$.
If this is the case, then we force with $\dot{\forceQ}_{\alpha}:=F(\alpha)$, but only if $\dot{\forceQ}_{\alpha}$ satisfies that \[\tilde{W}[G_{\alpha}] \models \dot{\forceQ}_{\alpha}^{G_{\alpha}} \Vdash``\text{There are $\aleph_2$-many trees in $\vec{T}$ which are still Suslin}".\]
Otherwise we force with the trivial forcing.
\end{itemize}
This ends the definition of our iteration. We add as a remark, that at every intermediate stage $\alpha< \omega_2$ of the iteration, by definition, there will still be $\aleph_2$-many Suslin trees from our definable sequence $\vec{T}$ which are still Suslin in the intermediate model $L[G_{\alpha}]$, where $G_{\alpha}$ as always denotes the generic for the iteration cut at $\alpha$. Thus in the iteration, we never run out of trees, so the length of it will be $\omega_2$.

\subsubsection{Properties of $W_1$}
After $\omega_2$-many steps we arrive at a model $\tilde{W}[G_{\omega_2}]=:W_1$ which will satisfy the $\Sigma^1_3$-uniformization property and $\MA$ as we shall show now.
The following is straightforward from the definition of the iteration.
\begin{lemma}
 $H(\omega_2)^{W_1} \models \T$. In particular, every subset of $\omega_1$ is coded by a real and every real is coded into an $\omega$-block of our definable sequence of Suslin trees $\vec{T}$.
\end{lemma}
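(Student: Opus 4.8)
The plan is to verify, clause by clause, that $H(\omega_2)^{W_1}$ models each of the six sentences comprising $\T$. First I would dispose of the structural clauses. Since the preparatory forcing $\forceR_0 \ast \forceR_1$ producing $W_0$ preserves cardinals (it is $\sigma$-closed followed by an $S$-proper, $\omega$-distributive forcing, and $\GCH$ holds in $L[\vec{T}]$), and since the length-$\omega_2$ iteration over $W_0$ is a finite support iteration of ccc posets and hence itself ccc, the cardinal $\omega_2$ remains regular in $W_1$; thus $H(\omega_2)^{W_1} \models \ZFP$ by the standard fact that $H(\kappa) \models \ZFP$ for regular $\kappa$. The clause $\forall x(|x| \le \aleph_1)$ is immediate, as every element of $H(\omega_2)$ has hereditary cardinality $< \omega_2$. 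For the stationary-pattern clause, I would observe that for each pair $(S_{2\beta}, S_{2\beta+1})$ exactly one member was already made nonstationary in $W_0$ by the club-shooting forcing of the preparatory stage; since the whole iteration over $W_0$ is ccc, and ccc forcing neither destroys stationary sets nor removes previously added clubs, this pattern is preserved into $W_1$ and hence reflects into $H(\omega_2)^{W_1}$.

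The two coding clauses are handled by a bookkeeping-plus-absoluteness argument. By ccc-ness and the length $\omega_2$, every $X \subset \omega_1$ and every real $r$ of $W_1$ already appears in some intermediate model $W_0[G_\alpha]$ with $\alpha < \omega_2$, since a nice name for such an object has size $\aleph_1$ and is therefore bounded below $\omega_2$. I would arrange the bookkeeping so that each such $X$ and each such $r$ is, at some later successor stage, placed into a freshly created transitive $\T$-model $M$ of size $\aleph_1$. Because $M \models \T$, inside $M$ the set $X$ is coded into a real via the $<_L$-least almost disjoint family $F$, and $r$ is coded into an $\omega$-block of Suslin trees; since the defining relation $``x \cap f_\alpha$ is finite$"$ is arithmetic, and since ``has a cofinal branch'' and ``is special'' are witnessed by objects of size $\le \aleph_1$ and are upward absolute, the relevant witnesses lie in $M \subset W_1$ and continue to code $X$, respectively $r$, in $W_1$. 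As these are existential statements, their truth transfers up to $H(\omega_2)^{W_1}$, giving clauses three and four — which is exactly the ``in particular'' assertion of the lemma.

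The main obstacle is the final clause, that every $T_\gamma$ has a cofinal branch or is special: this must hold of all $\aleph_2$-many trees of $\vec{T}$ simultaneously in $W_1$, even though at each intermediate stage a Suslin tail of $\vec{T}$ must survive in order for the iteration to continue past that stage. The resolution is to choose the bookkeeping so that every index $\gamma < \omega_1 \cdot \omega_2$ is eventually assigned, and its tree branched or specialized, at some stage $\alpha_\gamma < \omega_2$, while at each fixed $\alpha < \omega_2$ cofinally many indices remain as yet untouched. This is consistent with the tail-Suslinity lemma precisely because that lemma only guarantees a Suslin tail over each $W_0[G_\alpha]$ with $\alpha < \omega_2$, and never at the end of the iteration. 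Since ``branched or special'' is a permanent, upward-absolute decision once realized, every $T_\gamma$ with $\gamma < \omega_1 \cdot \omega_2$ is decided in $W_1$, and the clause holds in $H(\omega_2)^{W_1}$. Assembling the six verifications yields $H(\omega_2)^{W_1} \models \T$.
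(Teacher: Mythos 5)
Your proof is correct, and it is essentially the paper's own (implicit) argument: the paper gives no proof at all, declaring the lemma ``straightforward from the definition of the iteration,'' and your clause-by-clause check is precisely that verification written out. In particular, you correctly isolate the two points the paper leaves tacit --- that the bookkeeping must guarantee every real and every subset of $\omega_1$ of $W_1$ eventually lands in some freshly created $\T$-model (using that each such object appears at a bounded stage by ccc-ness and regularity of $\omega_2$), and that the heights of the $\T$-models must be cofinal in $\omega_2$ so that every tree of $\vec{T}$ is eventually branched or specialized, which coexists with the tail-Suslinity lemma because each stage decides only boundedly many trees --- and your appeals to upward absoluteness (plus the mutual exclusivity of branching and specializing while $\omega_1$ is preserved) to transfer the codings from the size-$\aleph_1$ models up to $H(\omega_2)^{W_1}$ are exactly what is needed.
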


The upshot of forcing $\MA$ in the diagonal fashion utilized in the definition of $\forceP_{\omega_2}$ are the two following lemmas
\begin{lemma}
Let $\alpha < \omega_2$ and let $\forceP_{\alpha}$ be the initial segment of $\forceP_{\omega_2}$. Then
\[\tilde{W} \models |\forceP_{\alpha}| = \aleph_1 \]
\end{lemma}
\begin{proof}
The proof is by induction on $\alpha < \omega_2$.
For limit stages $\alpha$, the assertion follows from the fact that we use finite support in our iteration. For successor stages $\alpha+1$, we note that $\forceP_{\alpha+1}=\forceP_{\alpha} \ast \dot{\forceQ}_{\alpha}$ and that by the way we defined $\forceP_{\omega_2}$, there must be a stage $\beta_{\alpha} < \omega_2$ such that $\forceP_{\alpha} \in W[G^0_{\beta_{\alpha}}][G^1_{\beta_{\alpha}}]$. Indeed if $\alpha$ is a limit ordinal, then this follows immediately from our definition of the iteration and if $\alpha$ is successor then this follows from the fact that in the process of producing the next $\T$-model, the only forcings which are used are trees from $\vec{T}$ and almost disjoint coding forcings relative to the canonical almost disjoint family of reals. Likewise there is $\beta_{\alpha+1} < \omega_2$ such that $\dot{\forceQ}_{\alpha}$ is in fact in $W[G^0_{\beta_{\alpha+1}}] [G^1_{\beta_{\alpha+1}}]$.
Now we note that $\CH$ is true in $W[G^0_{\beta_{\alpha+1}}] [G^1_{\beta_{\alpha+1}}]$, as $\CH$ is true in $W$ and both forcings are $\aleph_1$-length iterations of ccc forcings over $W$. This implies that $\dot{\forceQ}_{\alpha}$ is essentially an $\aleph_1$-sequence of  countable antichains in $\forceP_{\alpha}$ which is an element of $W[G^0_{\beta_{\alpha+1}}] [G^1_{\beta_{\alpha+1}}]$.
Hence $|\forceP_{\alpha+1}| = \aleph_1$, and the lemma is proved.
\end{proof}

\begin{lemma} In $W_1$, $\MA$ holds.
\end{lemma}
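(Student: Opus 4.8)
The plan is to run the classical Solovay--Tennenbaum argument, adapted to the present iteration. The whole point is that $\forceP_{\omega_2}$ is a finite support iteration of length $\omega_2$ in which \emph{every} iterand is ccc, so that the standard machinery applies once this is verified. First I would check that the entire iteration has the countable chain condition. At limit stages we explicitly force with ccc posets $F(\alpha)$ of size $\aleph_1$, so those iterands are ccc by fiat. At a successor stage $\alpha=\beta+1$ the iterand is $\forceR_0 \ast \forceR_1$: the forcing $\forceR_1$ is an almost disjoint coding $\mathbb{A}_F(Y_\alpha)$, which is ccc, and $\forceR_0$ is a finite support iteration of length $\omega_1$ whose single steps add cofinal branches to Suslin trees or specialize them (both ccc, since a Suslin tree is itself ccc and Baumgartner specialization is ccc) together with further almost disjoint codings. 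Crucially, no club-shooting (which is only $S$-proper, not ccc) occurs \emph{inside} the iteration over $W_0$: the stationarity pattern of the $S_\gamma$'s demanded by $\T$ was already fixed in the preparatory model $W_0$, so manufacturing a new $\T$-model only requires deciding trees (branch or special) and coding reals, all by ccc forcing. Hence $\forceR_0$, and thus each $\dot{\forceQ}_\alpha$, is ccc, and by the Solovay--Tennenbaum theorem that a finite support iteration of ccc forcings is ccc, $\forceP_{\omega_2}$ is ccc. In particular all cardinals are preserved; $\omega_1$ in particular survives so that $\MA$ is meaningful, and a nice-name count over $W_0 \models \GCH$ gives $2^{\aleph_0}=\aleph_2$ in $W_1$.

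Next I would set up the bookkeeping. Using $\GCH$ in $W_0$ together with the $\aleph_2$-cc of $\forceP_{\omega_2}$, every ccc poset of size $\le \aleph_1$ occurring in an intermediate model $W_0[G_\alpha]$ is coded by a subset of $\omega_1$ lying in $H(\omega_2)^{W_0[G_\alpha]}$, and there are only $\aleph_2$ many such objects across the whole iteration. I would arrange the bookkeeping $F$ on the (cofinally many) limit stages below $\omega_2$ so that every such poset is returned cofinally often, forcing with it at a limit stage only when it is forced to be ccc there, and trivially otherwise; this keeps the whole iteration ccc. For the verification I then invoke the standard reduction that $\MA$ for all ccc posets follows from $\MA$ restricted to ccc posets of size $\le \aleph_1$, via a L\"owenheim--Skolem argument producing a ccc suborder of size $\aleph_1$ dense enough for the given $\aleph_1$ many dense sets.

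Finally, given in $W_1$ a ccc poset $\forceP$ of size $\le \aleph_1$ and a family $\{D_\xi : \xi<\omega_1\}$ of dense subsets, the $\aleph_2$-cc shows that $\forceP$ and all the $D_\xi$ already belong to some $W_0[G_\alpha]$ with $\alpha<\omega_2$. Since $\forceP$ is ccc in $W_1 \supseteq W_0[G_\beta]$ and the intermediate extensions preserve cardinals, $\forceP$ is ccc in each $W_0[G_\beta]$; hence at the cofinally many bookkeeping stages $\beta\ge\alpha$ with $F(\beta)=\forceP$ the construction genuinely forces with $\forceP$, and the filter $G(\beta)$ it adds meets every dense subset of $\forceP$ lying in $W_0[G_\alpha]$, in particular all $D_\xi$. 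This filter lies in $W_1$, which is exactly $\MA$. The main obstacle is the first step, namely confirming that the coding apparatus used to build the $\T$-models is genuinely ccc --- that it never appeals to the non-ccc club-shooting and that its length-$\omega_1$ components are iterations rather than products --- since once ccc-ness of the full iteration is secured, the Solovay--Tennenbaum verification is routine.
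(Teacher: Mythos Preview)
Your proposal is correct and is exactly the approach the paper takes: the paper's entire proof reads ``This is just as the standard Solovay--Tennenbaum argument.'' You have simply unpacked the details the paper leaves implicit, in particular the verification that the successor-stage coding forcings $\forceR_0 \ast \forceR_1$ are ccc (branching/specializing trees and almost disjoint coding, with the non-ccc club shooting already completed in the preparatory model $W_0$), after which the finite-support bookkeeping argument is routine.
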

\begin{proof}
Assume that $\forceQ \in W_1=\tilde{W}[G_{\omega_2}]$ is an $\aleph_1$-sized partial order with the ccc and that $D$ is an $\aleph_1$-sized family of dense subsets of $\forceQ$. Then there is a stage $\beta < \omega_2$, such that, if $G_{\beta}$ denotes the generic filter for the intermediate forcing $\forceP_{\beta} \subset \forceP_{\omega_2}$, then $\forceQ$ and $D$ are already elements of $\tilde{W}[G_{\beta}]$. 

By the (proof of the) last lemma, we know that $\forceP_{\beta}$ is an $\aleph_1$-sized forcing as seen from $\tilde{W}$, and there are ordinals $\alpha_{\beta} < \omega_2$, such that
$\forceP_{\beta}$ is in fact an element of $W[G^0_{\alpha_{\beta}} ][G^1_{\alpha_{\beta}}]$.

But then, $\forceP_{\beta} \ast \forceQ$ is an element of $W[G^0_{\alpha_{\beta}} ][G^1_{\alpha_{\beta}}]$, and $W[G^0_{\alpha_{\beta}} ][G^1_{\alpha_{\beta}}] \models ``|\forceP_{\beta} \ast \forceQ| = \aleph_1$ and has the ccc$"$.

By our choice of the bookkeeping function $F$, there will be cofinally many stages $\gamma> \beta$ such that the forcing considered by $F$ at stage $\gamma$ is $\forceQ$. In particular there will be a least such stage, denoted again by $\gamma > \beta$, such that $D \in \tilde{W}[G_{\gamma}]$ and $\forceQ$ is suggested by $F$ at $\gamma$. We claim that at stage $\gamma$,
\[\tilde{W}[G_{\gamma}] \models \forceQ \Vdash ``\text{There are $\aleph_2$-many trees in $\vec{T}$ which are still Suslin}". \]
If the claim is true, then by definition, we must have used $\forceQ$ at stage $\gamma$, and so $\MA$ must be true.

To see why the claim holds, we first let $H$ be $\forceQ$-generic and note that if $\alpha_{\gamma}$ denotes a stage $<\omega_2$ such that $\forceP_{\gamma} \in W[G^0_{\alpha_{\gamma}}] [G^1_{\alpha_{\gamma}}]$, we can factor $G^0_{\omega_2}$ and $G^1_{\omega_2}]$ into $G^i_{\alpha_{\gamma}} \ast G^i_{[\alpha_{\gamma},\omega_2)}$, and hence
\[ W[G^0_{\omega_2}][G^1_{\omega_2}][G_{\gamma}] [H] = W[G^0_{\alpha_{\gamma}}] [G^1_{\alpha_{\gamma}}] [G_{\gamma}][H] [G^0_{[\alpha_{\gamma},\omega_2)} ] [G^1_{[\alpha_{\gamma},\omega_2)} ]. \]
So, in particular, the $\aleph_2$-many Cohen reals added by $G^0_{[\alpha_{\gamma}, \omega_2)}$ will still be Cohen-generic reals when considered over the ground model
$W[G^0_{\alpha_{\gamma}}] [G^1_{\alpha_{\gamma}}] [G_{\gamma}][H]$
As a consequence 
\begin{align*}
W[G^0_{\alpha_{\gamma}}] [G^1_{\alpha_{\gamma}}] [G_{\gamma}] [H] [G^0_{[\alpha_{\gamma},\omega_2)} ] [G^1_{[\alpha_{\gamma},\omega_2)} ] \models &``\text{There are $\aleph_2$-many trees in $\vec{T}$} \\&  \qquad\text{ which are still Suslin.} "
\end{align*}
hence
\begin{align*}
 W[G^0_{\omega_2}][G^1_{\omega_2}][G_{\gamma}] [H] \models &``\text{There are $\aleph_2$-many trees in $\vec{T}$} \\&  \qquad\text{ which are still Suslin.} "
\end{align*}
Thus the claim is true and we must have used $\forceQ$ at stage $\gamma$.

\end{proof}

We know already that transitive $\T$-models of size $\aleph_1$ are uniquely determined by their ordinal height. As we forced with many almost disjoint coding forcings, we obtain a projectively definable set of reals which code $\T$-models. For a real $r$ we let $(r)_1$ and $(r)_2$ denote its recursive splitting into its even and its odd part respectively.
\begin{lemma}\label{realscodeTmodels}
In $W_1$, for every $\gamma < \omega_2$ there is a $\T$-model $M$ of ordinal height $>\gamma$, and there is a real $r_M$ which satisfies that $(r_M)_2 = r_{(M \cap \omega_2)}$, where $r_{(M \cap \omega_2)}$ is a real which codes the ordinal $M \cap \omega_2$, and which additionally satisfies:
\begin{align*}
(\ast) \quad \forall N (|N| = \aleph_0& \, \land N \text{ is transitive and }  \omega_1^N=(\omega_1^{L})^N  \land r_M \in N \\& \rightarrow
N \models  ``L[r_M] \models \text{ $(r_M)_1$ codes a $\T$-model   of height $\alpha((r_M)_2)$,} \\& \qquad \qquad \qquad \qquad \text{ where $\alpha((r_M)_2)$ is the ordinal $< \omega_2^{N}$ } \\&\qquad \qquad \qquad \qquad \text{ decoded from the real $(r_M)_2$.}")
\end{align*}
If $N$ is uncountable and transitive  and $r_M$ is as above, then
\begin{align*}
N \models ``\text{The model decoded out of $r_M$ is a $\T$-model.}"
\end{align*}

\end{lemma}
\begin{proof}
We defined our iteration such that at successor stages, we always produce $\T$-models and subsequently code them into reals. We let $\gamma < \omega_2$ be arbitrary and consider an $\alpha=\beta+1 > \gamma$, i.e. a stage past $\gamma$ where we produce a new $\T$-model $M$ and a real $r$ which codes the set $Y_{\alpha}$, where $Y_{\alpha}$ is as in the definition of the iteration.

If we let $s$ be a real which codes the ordinal height of $M$, i.e. a real which codes a set $X_s \subset \omega_1$ which in turn codes a wellorder of ordertype $M \cap \omega_2$, then we claim that the real $x$ which consists of $r$ on its even, and $s$ on its odd entries is as desired, i.e. $x=r_M$.

To see this, assume that $N$ is countable, transitive, $\omega_1^N=(\omega_1^L)^N$ and $r_M \in N$.
Now first note that if $N$ contains $r_M$, it also must contain $Y_{\alpha} \cap \omega_1^N$, as the latter is coded into $r$ which in turn is coded into $r_M$, and $N$ can use its local almost disjoint family of $L$-reals $D \upharpoonright \omega_1^N=D \upharpoonright (\omega_1^L)^N$ to successfully compute $Y_{\alpha} \cap \omega_1^N$. 
This means in particular, that there is a countable $X \prec L_{\omega_2}[M]$, such that the transitive collapse of $X$, denoted with $\bar{X}$ and $N$ have the same $\omega_1$.
Consequentially, if  we consider $(L_{\omega_1}[r_M])^N$, then $(L_{\omega_1}[r_M])^N$ is a countable initial segment of $L[r_M]$ and is equal to the $(L_{\omega_1}[r_M])^{\bar{X}}$ which is formed inside $\bar{X}$. By elementarity, \[ \bar{X} \models ``L_{\omega_1}[r_M] \models \text{$(r_M)_1$ codes a $\T$-model of height } \alpha((r_M)_2)", \] and so also
\[ N \models ``L_{\omega_1}[r_M] \models (r_M)_1 \text{ codes a $\T$-model of height }\alpha((r_M)_2)" .\]
But this is already the first assertion of the lemma.

To show that for any $\aleph_1$-sized, transitive $N$, $N\models ``$The model decoded out ot $(r_M)_1$ is $M"$, we assume for a contradiction that this is wrong and $N$ is a witness that the assertion is wrong. Then we can pick a countable \[X \prec N\] which contains $r_M$, and its transitive collapse $\bar{X}$ would see that 
\[\bar{X} \models ``L[r_M] \models (r_M)_1 \text{ does not code a $\T$-model}". \] 
Now it is sufficient to note that $\bar{X}$ is a model as in the first assertion; indeed it is countable, transitive, contains $r_M$ and its local $L$ computes $\omega_1$ correctly, as $N$ does so. But this is a contradiction.
\end{proof}

Note that the statement $(\ast)$ is a $\Pi^1_2(r_{M})$-assertion and
as a consequence there is a $\Pi^1_2$-definable set of reals which code, in the sense of $(\ast)$, $\T$-models.

The fact that $\T$-models are stratified by their ordinal height enables the
following definition of a well-order of the reals, in fact of $P(\omega_1)$.
For $x,y \in 2^{\omega}$, let $M_x$ be the least $\T$-model which contains $x$ and likewise define $M_y$. 
If $x \in M$, then we can assign an ordinal $\alpha_x$ to $x$ which is the least ordinal such that $M$ thinks that there is an $\omega$-block of Suslin trees from $\vec{T}$ starting at $\alpha_x$ which codes $x$ i.e it is true that 
\[ n \in x \Leftrightarrow M \models T_{\alpha_x+n} \text{ has a branch. }\]
and
\[n \notin x \Leftrightarrow M \models T_{\alpha_x +n} \text{ is special. }\]
This induces a wellorder of the reals, in fact a wellorder of $P(\omega_1)$. For $x,y \in 2^{\omega}$, we let
\[x < y \Leftrightarrow \alpha_x < \alpha_y.\]

\begin{lemma}\label{uniformization}
In $W_1$, the $\Sigma^1_3$-uniformization property holds.
\end{lemma}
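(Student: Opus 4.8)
The plan is to convert the definable wellorder $<$ of the reals, together with the $\Pi^1_2$ recognizability of $\T$-model codes supplied by $(\ast)$, into $\Sigma^1_3$-definable uniformizing functions via a least-witness construction. First I would reduce to selecting a least witness pair. Given a $\Sigma^1_3$ set $A\subseteq 2^\omega\times 2^\omega$, write $A(x,y)\Leftrightarrow \exists z\, B(x,y,z)$ with $B$ a $\Pi^1_2$ relation, fix a recursive pairing of reals, and let $<^*$ be the wellorder $<$ transported to coded pairs $\langle y,z\rangle$. The uniformizing function will be $f(x)=y$ where $\langle y,z\rangle$ is the $<^*$-least pair with $B(x,y,z)$; since the second coordinate is projected away at the end, it suffices to define ``$\langle y,z\rangle$ is the $<^*$-least $B$-witness for $x$'' in a $\Sigma^1_3$ manner and then quantify $\exists z$.

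The key structural fact I would isolate is that $\T$-models are downward closed under $<$: if $w\in M$ for a $\T$-model $M$ and $w'<w$, then the $\omega$-block of $\vec T$ coding $w'$ sits below the block coding $w$, hence inside $M$, so $w'\in M$ as well. Consequently the $<^*$-least $B$-witness computed inside any $\T$-model $M$ that contains $x$ and at least one genuine witness pair already coincides with the globally least witness, because every $<^*$-smaller pair again lies in $M$. The second ingredient is Shoenfield absoluteness: every forcing in the construction preserves $\omega_1$, so $\omega_1^{W_1}=\omega_1^L$, and every $\T$-model $M$ obtained from an $(\ast)$-real satisfies $\omega_1^M=\omega_1^L=\omega_1^{W_1}$ together with a sufficient fragment of $\ZFP$. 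Therefore $\Sigma^1_2$- and $\Pi^1_2$-statements with parameters in $M$ are absolute between $M$ and $W_1$; in particular ``$B(x,y,z)$'' for reals in $M$ is equivalent to the internal assertion $M\models B(x,y,z)$, which is arithmetic in any real $r$ coding $M$.

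Putting this together I would define
\[
 f(x)=y \iff \exists r\,\exists z\,\Big[\mathrm{Code}(r)\ \wedge\ x\in M_r\ \wedge\ M_r\models B(x,y,z)\ \wedge\ \langle y,z\rangle\ \text{is the}\ <^*\text{-least such pair in}\ M_r\Big],
\]
where $\mathrm{Code}(r)$ is the $\Pi^1_2$ predicate $(\ast)$ stating that $r$ is an almost disjoint code of a $\T$-model $M_r$. The clauses ``$x\in M_r$'', ``$M_r\models B(x,y,z)$'', and the minimality clause are all arithmetic in $r$, since the wellorder restricted to $M_r$ and the relation $B$ restricted to reals of $M_r$ are decoded directly from $r$; hence the matrix is $\Pi^1_2$ and the two leading existential real quantifiers yield a $\Sigma^1_3$ definition. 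Downward closure guarantees both well-definedness and that $f$ uniformizes $A$: any admissible $M_r$ computes the same global least witness, its first coordinate is the value of $f$, and $\mathrm{dom}(f)=pr_1(A)$ because every triple of reals sits jointly in some $\T$-model of large enough height.

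I expect the principal obstacle to be the verification that the computation internal to $M_r$ agrees with truth in $W_1$, i.e. the $\Sigma^1_2/\Pi^1_2$-absoluteness between the $\T$-models and the ambient universe. This is exactly what forces the bookkeeping ensuring $\omega_1^{M_r}=\omega_1^{W_1}$ and the careful design of $(\ast)$ so that $\T$-model codes are already recognized by their countable approximations. The second delicate point is confirming that the $<^*$-least witness inside $M_r$ is genuinely global, for which the downward closure of $\T$-models under the block ordering $<$ is the essential lemma to establish rigorously.
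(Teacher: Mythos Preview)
Your argument is essentially correct and arrives at a $\Sigma^1_3$ uniformizing function, but it follows a genuinely different route from the paper's own proof. The paper does \emph{not} prove or use any downward-closure of $\T$-models under $<$, and it does not select the globally $<^*$-least $\Pi^1_2$-witness pair. Instead, for each $x$ it singles out the \emph{shortest} $\T$-model $M$ which contains $x$ and internally sees $\exists y\,\varphi(x,y)$, and then sets $f(x)$ equal to the $<$-least such $y$ \emph{inside that particular $M$}; the paper explicitly remarks that this $y$ need not be the global $<$-minimum of the $x$-section and that this does not matter. Your approach buys a canonical, model-independent value of $f(x)$ (any $\T$-model containing $x$ and one witness already computes the global least pair), at the price of having to isolate and verify the downward-closure lemma. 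The paper's approach avoids that lemma altogether but must instead build the clause ``$M$ is the shortest such model'' into the $\Pi^1_2$ matrix.

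One technical point in your write-up should be corrected. The clauses ``$x\in M_r$'', ``$M_r\models B(x,y,z)$'' and the minimality clause are \emph{not} arithmetic in $r$: decoding $M_r$ from the real $r$ requires the almost disjoint family $F\subset L_{\omega_1}$, so there is no arithmetic access to $M_r$ from $r$ alone. What is true---and this is exactly what $(\ast)$ is engineered for---is that the whole matrix can be expressed as a single $\Pi^1_2(r,x,y,z)$ sentence of the form ``for every countable transitive $N$ with $\omega_1^N=(\omega_1^L)^N$ and $r,x,y,z\in N$, $N$ decodes from $r$ a $\T$-model with the stated properties''. With the leading $\exists r\,\exists z$ this is still $\Sigma^1_3$, so your conclusion stands; only the justification should go through the $(\ast)$-style $\forall N$ formulation rather than through an (incorrect) arithmetic reduction.
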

\begin{proof}
Let $\varphi(v_0,v_1)=\exists v_2 \psi(v_0,v_1,v_2)$ be an arbitrary $\Sigma^1_3$-formula with two free variables, let $x \in 2^{\omega}$ and assume that $W_1 \models \exists y\varphi(x,y)$, thus the $x$-section of the set $A \subset 2^{\omega} \times 2^{\omega}$ defined via $\varphi(v_0,v_1)$ is non-empty.

We collect the set of $M$ where $M$ is a $\T$-model, for which there is a real $r_M$ coding it, and which contains the reals $x,y$ and sees that $\exists z \psi (x,y,z)$ is true. Note that by Shoenfield absolutness, this implies that $\varphi(x,y)$ really is true.
As there is a definable wellorder $<$ of the reals, we pick $M$ such that its $<$-least code is minimal among all such reals coding such $\T$-models. With the help of our coding this can be written in a $\Pi^1_2(r_M)$-way:
\begin{align*}
\forall N (r_M \in N \land \omega_1^N=(\omega_1^L)^N \rightarrow
&N \text{ decodes out of } r_M \text{ a } \T \text{-model } M \text{ such that } \\& M \text{ is the $<$-least such model which sees that }\\& \exists z \psi(x,y,z) \text{ is true.})
\end{align*}
The fact that $\T$-models can internally define a wellorder of its reals, can be used to single out a pair of reals $(y,z)$ which $M$ thinks is the least to satisfy $\psi(x,y,z)$.  Then we let $y$ be the first coordinate of this $<$-least pair just defined and set $y$ to be the value of our uniformizing function $f$ at $x$.
Thus the relation $$f(x)=y$$ can be defined as follows:
\begin{align*}
(\heartsuit) \quad \exists r \forall N (r \in N \land \omega_1^N=(\omega_1^L)^N \rightarrow
&N \text{ decodes out of } r \text{ a } \T \text{-model } M \text{ such that } \\& M \text{ is the $<$-least such model which sees that }\\& \exists y\exists z \psi(x,y,z) \text{ is true and which thinks that for all} \\& \text{ pair of reals } (y',z') < (y,z)  \,\varphi(x,y',z') \text{ is wrong.} )
\end{align*}
Note that this is a $\Sigma^1_3$-formula and we take this to be the definition of $f$.

What is left is to show that $f$ defined via $(\heartsuit)$ really defines a uniformizing function for the $\Sigma^1_3$-set defined by $\{ (x,y) \, : \, \varphi(x,y)\}$. We shall show three things, first we show that $f$ is always well-defined; second we show that whenever $f(x)=y$, then $\varphi(x,y)$ is true; and third we show that $dom f= \{x \, : \, \exists y (\varphi(x,y)\}$.

First, we shall show that $f$ is well-defined. Assume for a contradiction that there are $y\ne y'$ such that $f(x)=y$ and $f(x)=y'$ defined via $(\heartsuit)$ is true and let $r_y$ and $r_{y'}$ be the reals which witness the truth of $f(x)=y$ and $f(x)=y'$ in $(\heartsuit)$.
Now, if $H(\theta)$ is sufficiently big, then by lemma \ref{realscodeTmodels} it will decode out of $r_{y}$ and $r_{y'}$ two $\T$-models $M_y$ and $M_{y'}$ which are both $<$-least for thinking that $\exists y \exists z (\psi(x,y,z))$ is true in $M_y$ and $M_{y'}$. So $M_y=M_{y'}$ and hence $y=y'$. Thus $f(x)$ is welldefined.

Second we assume that $f(x)=y$ holds, and let $r$ be the real which witnesses that the property $(\heartsuit)$ is true. Then, by lemma \ref{realscodeTmodels}, if $N$ is some sufficiently big $H(\theta)$, it is clear that $r$ is in fact a code for a true $\T$-model $M$, which in turn must think that $\varphi(x,y)$ is true. But then, by Shoenfield absoluteness, $\varphi(x,y)$ is true so $f(x)=y$ implies that $\varphi(x,y)$ is true.

Third, if $x$ is such that there is a $y$ such that $\varphi(x,y)$ is true, then there is a $\T$-model $M$ which sees that as well.
Surely there will be a bigger $\T$-model $M' \supset M$ which contains $r_M$, where the latter is a real which codes $M$.
As $M'$ is coded by some real $r_{M'}$ we have that $r_{M'}$ must satisfy $(\heartsuit)$, hence $x \in dom f$. This shows that if
$\exists y (\varphi(x,y))$, then $x \in dom f$. The other inclusion is already shown in the second claim, so the third claim is shown.

\end{proof}

The above generalizes to the boldface case immediately.
\begin{corollary}
In $W_1$ for every real $r$, every $\Sigma^1_3(r)$-relation in the plane can be uniformized by a $\Sigma^1_3(r)$-definable function.
\end{corollary}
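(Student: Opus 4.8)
The plan is to relativize the proof of the preceding lemma verbatim, carrying the real $r$ as a parameter throughout. The single place where the lightface argument must be adjusted is the choice of the witnessing $\T$-model: in place of the shortest $\T$-model containing the input real $x$, I would take the shortest transitive $\T$-model $M$ containing both $x$ and the parameter $r$. Such a model always exists in $W_1$, for by the lemma establishing $H(\omega_2)^{W_1} \models \T$ every real---in particular $r$---is coded into an $\omega$-block of the definable sequence $\vec{T}$; and since $\T$-models are stratified by their ordinal height (by the uniqueness lemma, each such model is determined by $ORD \cap M$), there is a least one whose height exceeds the coding blocks of both $x$ and $r$.

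Concretely, I would fix a $\Sigma^1_3(r)$-formula $\varphi(v_0,v_1)$ and a real $x$ with $W_1 \models \exists y\,\varphi(x,y)$, let $M$ be the shortest $\T$-model containing $x$ and $r$ which already sees $\exists y\,\varphi(x,y)$, and set $f(x)$ to be the $<$-least $y \in M$ with $M \models \varphi(x,y)$, where $<$ is the internal well-order of $P(\omega_1)^M$ induced by the ordinal heights of the coding blocks. Shoenfield absoluteness guarantees, exactly as before, that a witness seen by $M$ is a genuine witness, so $f$ is total on the relevant domain and its graph lies inside the relation defined by $\varphi$.

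The remaining point is to verify that the relation $f(x)=y$ stays $\Sigma^1_3(r)$. For this I would relativize the local test $(\ast)$ to its parametrized version by adjoining the clause $r \in N$ to the quantification over countable transitive models $N$ satisfying $\omega_1^N = (\omega_1^L)^N$; this keeps the assertion $\Pi^1_2(r)$, since it merely appends a single membership statement about the parameter. Prefixing an existential real quantifier over a code for the witnessing model then yields the desired $\Sigma^1_3(r)$-definition, with $r$ entering only as a parameter of $\varphi$ and in the side clause $r \in N$. I anticipate no genuine obstacle: the decoding machinery, the absoluteness, and the internal well-order all relativize without change, and the only mild subtlety is confirming that the added clause $r \in N$ does not raise the complexity of the local test---which it plainly does not.
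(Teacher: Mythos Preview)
Your proposal is correct and matches the paper's intended approach: the paper offers no separate argument for the corollary, remarking only that ``the above generalizes to the boldface case immediately,'' and your relativization---carrying $r$ into the choice of $\T$-model and into the local test---is precisely that immediate generalization spelled out.
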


\section{$\BPFA$ and the $\Sigma^1_3$-uniformization property.}

The results of the last section can be strengthened if we assume $\BPFA$.
We aim to prove the following result.
\begin{theorem}
Assume $\BPFA$ and $\omega_1=\omega_1^L$, then the $\Sigma^1_3$-uniformization property holds.
\end{theorem}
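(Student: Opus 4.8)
The plan is to mimic the uniformization argument for $W_1$ from the previous section, replacing the explicit iteration that produced certificates by an appeal to the reflection principle underlying $\BPFA$. Recall that the two ingredients used in the $W_1$ analysis were a $\Pi^1_2$-definable class of \emph{certificates} --- reals $r$ satisfying the property $(\ast)$, i.e.\ coding a transitive $\T$-model $M$ that countable transitive models correctly decode --- together with the fact that every real $x$ lies in such an $M$. Once these are in place, the $\Sigma^1_3$-definition of the uniformizing function $f$ (take the certificate coding the shortest $\T$-model $M$ containing $x$ and seeing $\exists y\,\varphi(x,y)$, and let $f(x)$ be the $<^M$-least witness) goes through verbatim, and $\varphi(x,f(x))$ holds in $V$ by Shoenfield absoluteness since $M$ genuinely sees the witness. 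So the whole theorem reduces to the existence of certificates for every real, now to be extracted from $\BPFA$ rather than produced by forcing.

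First I would recall Bagaria's characterisation of $\BPFA$: it is equivalent to the scheme that every $\Sigma_1$-formula with parameters in $H(\omega_2)$ which can be forced by a proper poset is already true, i.e.\ $H(\omega_2) \prec_{\Sigma_1} H(\omega_2)^{V[G]}$ for every proper $\forceP$ and $\forceP$-generic $G$. For a fixed $x \in 2^{\omega} \subseteq H(\omega_2)$, the statement ``there is a real $r$ with $(\ast)(r)$ whose decoded $\T$-model contains $x$'' is a $\Sigma_1$-formula over $H(\omega_2)$ with parameter $x$: the property $(\ast)$ only quantifies over countable transitive models, which are bounded in $H(\omega_2)$, so it is $\Pi^1_2$ and hence $\Delta_1(H(\omega_2))$, and prefixing the real quantifier $\exists r$ keeps it $\Sigma_1(H(\omega_2))$.

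Second --- and this is the technical heart --- I would show this $\Sigma_1$-statement is forceable over $V$ by a proper poset. Here the assumption $\omega_1 = \omega_1^L$ does the work: it guarantees that the $L$-objects appearing in $\T$ (the almost disjoint family $F$, the $\diamondsuit$-sequence and the stationary sequence $\vec S$) are computed correctly at the true $\omega_1$, so that a genuine countable transitive $\T$-model $M$ with $x \in M$ exists in $V$ --- one obtains it as a countable generic extension of some $L_{\gamma}[x]$, the construction of the previous section (adding the $\omega$-blocks of Suslin trees, killing them in the pattern coding the reals of $M$, shooting the clubs) being a countable procedure whose generics therefore exist in $V$. This $M$ carries no real as a code; but a single almost disjoint coding $\mathbb{A}_F(Y)$ of a set $Y$ encoding $M$ adds a real $r$ with $(\ast)(r)$, and this coding is ccc, hence proper. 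The club-shooting and Suslin-killing having taken place inside the countable $M$, no stationary subset of the true $\omega_1$ is disturbed, so $\BPFA$ applies and, by Bagaria's principle, the certificate $r$ already exists in $V$; since $(\ast)$ is $\Pi^1_2$ it is a correct certificate there. Running this for every $x$ yields certificates for all reals.

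Finally I would assemble the uniformizing function exactly as in the $W_1$ proof and verify its $\Sigma^1_3$-complexity. I expect the main obstacle to be not the reflection step itself but the \emph{well-definedness} of $f$: unlike in $W_1$, where a unique global sequence $\vec{T}$ forced any two $\T$-models of equal ordinal height to coincide, under $\BPFA$ distinct certificates may decode distinct $\T$-models, so one must argue that the shortest $\T$-model seeing $\exists y\,\varphi(x,y)$ yields a least witness independent of the chosen certificate. The plan is to derive this coherence from the rigidity of the coding: every real is written into the Suslin blocks in the canonical manner dictated by the $L$-definable $\vec S$, so that --- using $\omega_1 = \omega_1^L$ and the absoluteness of the $\Pi^1_2$ decoding --- any two minimal-height certificates for $x$ must agree on the initial segment of reals relevant to computing the $<$-least witness. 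The parameterised version, with $\omega_1 = \omega_1^{L[r]}$ in place of $\omega_1 = \omega_1^L$, is then obtained by relativising the whole argument to $r$.
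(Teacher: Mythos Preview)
Your approach has a genuine gap at exactly the point you flag as ``the main obstacle''. The theory $\T$ from the $W_1$ section is built around the sequence $\vec{T}$ of Suslin trees, which in $W_0$ was \emph{generically added} and then made $\Sigma_1(\omega_1)$-definable via the stationary coding. Under bare $\BPFA + \omega_1 = \omega_1^L$ there is no such canonical $\vec{T}$: the $L$-stationary sets $S_\beta$ need not have been killed in any coherent pattern, so a countable ``$\T$-model'' you manufacture as a generic extension of some $L_\gamma[x]$ carries its own private, generic choice of tree sequence. Two such models of the same ordinal height can therefore disagree on which reals occur at which positions, and your proposed ``rigidity of the coding'' does not exist --- the Suslin blocks are not determined by $L$-data alone but by the generic. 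Consequently the function $f$ is not well-defined: different certificates for the same $x$ may point to different ``$<$-least'' witnesses, and your $\Sigma^1_3$ formula defines a multi-valued relation rather than a uniformization. The reflection step via Bagaria's characterisation is fine in isolation, but it only buys you \emph{some} certificate, not a canonical one.

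The paper avoids this entirely by replacing $\T$ with a different theory $\T_{\vec C}$ built on the Caicedo--Veli\v{c}kovi\'c oscillation coding: reals are coded by triples of ordinals below $\omega_2$ via a fixed $L$-definable ladder system $\vec C$, and their theorem gives that under $\BPFA$ every real is so coded and every triple is stabilized. The crucial gain is that a transitive $\T_{\vec C}$-model is \emph{uniquely determined by its ordinal height} (no generic choices enter), so ``the shortest $\T_{\vec C}$-model seeing $\exists y\,\varphi(x,y)$'' is unambiguous. The paper then invokes the Caicedo--Friedman result that under $\MA + \omega_1 = \omega_1^{L[r]}$ every $\Sigma_1(\omega_1)$ relation on reals is $\Sigma^1_3(r)$, and reads off the uniformizing function exactly as in the $W_1$ argument. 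In short: you need a coding whose witnesses are provided \emph{outright} by $\BPFA$ and which is rigid by construction, not one that depends on a preparatory iteration.
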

Note that the above is not a consistency result. Its proof makes heavy use of a coding method invented by A. Caicedo and B. Velickovic (see \cite{CV}) as well as an important observation by A. Caicedo and S. D. Friedman that one can easily combine the Coding method with David's trick (see \cite{SyAndres}). As it is somewhat difficult to describe their technique in a short way without leaving out too many important features out, we opt to introduce their coding method briefly. 

\begin{definition}
A $\vec{C}$-sequence, or a ladder system, is a sequence $(C_{\alpha} \, : \, \alpha \in \omega_1, \alpha \text{ a limit ordinal })$, such
that for every $\alpha$, $C_{\alpha} \subset \alpha$ is cofinal and the order type of $C_{\alpha}$ is $\omega$.
\end{definition}
As we always work with $L$ as our ground model, there is a canonical ladder system $\vec{C} \in L$ which can be defined using $L$'s $\Delta^1_2$-definable well-order of the reals. From now on whenever we write $\vec{C}$, we have this canonical ladder system in mind.

For three subsets $x,y,z \subset \omega$ we can consider the oscillation function. First, turn the set $x$ 
into an equivalence relation $\sim_x$, defined on the set $\omega- x$ as follows: for natural numbers in the 
complement of $x$ satisfying $n \le m$, let $n \sim_x m$ if and only if $[n,m] \cap x = \emptyset$.
This enables us to define:
\begin{definition}
For a triple of subset of the natural numbers $(x,y,z)$ list the intervals $(I_n \, :\, n \in k \le \omega)$ of equivalence classes of
$\sim_x$ which have nonempty intersection with both $y$ and $z$. Then, the oscillation map $o(x,y,z):
k \rightarrow 2$ is defined to be the function satisfying

\begin{equation*}
o(x,y,z)(n) = \begin{cases}
0  & \text{ if min}(I_n \cap y) \le \text{min}(I_n \cap z) \\ 1 & \text{ else.}
\end{cases}
\end{equation*}

\end{definition}

Next, we want to define how suitable countable subsets of ordinals can be used to code reals. 
For that suppose that $\omega_1 < \beta < \gamma < \delta$ are fixed limit ordinals, and that 
$N \subset M$ are countable subsets of $\delta$.
Assume further that $\{ \omega_1, \beta, \gamma\} \subset N$ and that for every 
$\eta \in \{ \omega_1, \beta, \gamma\}$, $M \cap \eta$ is a limit ordinal and $N \cap \eta < M \cap \eta$.
We can use $(N,M)$ to code a finite binary string. Namely, let $\bar{M}$ denote the transitive collapse of 
$M$, let $\pi : M \rightarrow \bar{M}$ be the collapsing map and let 
$\alpha_M := \pi(\omega_1)$, $\beta_M := \pi(\beta), \, \gamma_M := \pi(\gamma) \, \delta_M:= \bar{M}$. 
These are all countable limit ordinals.
Furthermore set $\alpha_N:= sup(\pi``(\omega_1 \cap N))$ and let the height $n(N,M)$ of $\alpha_N$ in $\alpha_M$ 
be the natural number defined by

$$n(N,M):= card (\alpha_N \cap C_{\alpha_M}),$$ where $C_{\alpha_M}$ is an element of our previously fixed ladder system. 
As $n(N,M)$ will appear quite often in the following we write shortly $n$ for $n(N,M)$. Note that
as the order type of each $C_{\alpha}$ is $\omega$, and as $N \cap \omega_1$ is bounded below $M \cap \omega_1$,
$n$ is indeed a natural number.
Now, we can assign to the pair $(N,M)$ a triple $(x,y,z)$ of finite subsets of natural numbers as follows:
$$x:= \{ card(\pi(\xi) \cap C_{\beta_M}) \, : \, \xi \in \beta \cap N \}.$$ Note that $x$ again is finite as $\pi" (\beta \cap N)$ is 
bounded in the cofinal in $\beta_M$-set $C_{\beta_M}$, which has ordertype $\omega$. Similarly we define 
$$y:= \{ card(\pi(\xi) \cap C_{\gamma_M}) \, : \, \xi \in \gamma \cap N \}$$ and
$$z:= \{ card(\pi(\xi) \cap C_{\delta_M} \, : \, \xi \in \delta \cap N \}.$$ Again, it is easily 
seen that these sets are finite subsets of the natural numbers.
We can look at the oscillation $o(x \backslash n, y \backslash n, z \backslash n)$ 
and if the oscillation function at these points has a domain bigger or equal to $n$ then we write
\begin{equation*}
s_{\beta, \gamma, \delta} (N,M):= \begin{cases}
o(x \backslash n, y \backslash n, z \backslash n)\upharpoonright n & \text{ if defined } \\ \ast \text{ else.}
\end{cases}
\end{equation*}
We let $s_{\beta, \gamma, \delta} (N,M) \upharpoonright l = \ast$ when $l \geq n$.
Finally we are able to define what it means for a triple of ordinals $(\beta, \gamma, \delta)$ to code a real $r$.

\begin{definition}
For a triple of limit ordinals $(\beta, \gamma, \delta)$, we say that it codes a real $r \in 2^{\omega}$
if there is a continuous increasing sequence $(N_{\xi} \, : \, \xi < \omega_1)$ of countable sets of ordinals , also called a reflecting sequence, whose 
union is $\delta$ and which satisfies that whenever $\xi < \omega_1$ is a limit ordinal then there is a $\nu < \xi$ such that
$$ r = \bigcup_{\nu < \eta < \xi} s_{\beta, \gamma, \delta} (N_{\eta}, N_{\xi}). $$
\end{definition}
The technicalities in the definitions are justified by the fact that $\BPFA$ suffices to introduce witnesses to the codings.

\begin{theorem}[Caicedo-Velickovic]
Assume that $\BPFA$ holds, then:

\begin{enumerate}
\item[$(\dagger)$] Given ordinals $\omega_1 < \beta < \gamma < \delta < \omega_2$ of cofinality $\omega_1$,
then
there is a reflecting, i.e.,  increasing and continuous sequence $(N_{\xi} \, : \, \xi < \omega_1)$ such that $N_{\xi} \in [\delta]^{\omega}$ 
whose union is $\delta$ such that for every limit $\xi < \omega_1$ and every $n \in \omega$ there 
is $\nu < \xi$ and $s_{\xi}^n \in 2^n$ such that 
$$s_{\beta \gamma \delta}(N_{\eta}, N_{\xi}) \upharpoonright n = s_{\xi}^{n}$$ holds for every $\eta$ in the interval $(\nu, \xi)$.
We say then that the triple $(\beta, \gamma, \delta)$ is \textit{stabilized}.
 
\item[$(\ddagger)$] Further if we fix a real $r$
there is a triple of ordinals
$(\beta_r, \gamma_r, \delta_r)$ of size and cofinality $\aleph_1$ and a reflecting sequence 
$(P_{\xi} \, : \, \xi < \omega_1)$, $P_{\xi} \in [\delta_r]^{\omega}$ such that $\bigcup_{\xi < \omega_1} P_{\xi} = \delta_r$ 
and such that for every limit $\xi < \omega_1$ there is a $\nu < \xi$ such that 
$$\bigcup_{\nu < \eta < \xi} s_{\beta_r \gamma_r \delta_r} (P_{\eta}, P_{\xi}) = r.$$ 
We say then that the real $r$ is \textit{coded} by the triple $(\beta_r, \gamma_r, \delta_r)$.
\end{enumerate}
\end{theorem}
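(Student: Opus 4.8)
The plan is to deduce both $(\dagger)$ and $(\ddagger)$ from a single scheme: produce, by a proper forcing, a reflecting sequence with the desired oscillation behaviour, and then transport its existence back to $V$ using $\BPFA$. Here I use Bagaria's characterization of $\BPFA$ as the statement that $H(\omega_2)$ is $\Sigma_1$-elementary in $H(\omega_2)^{V[G]}$ for every proper generic $G$. The key observation is that the assertion ``there is a reflecting sequence of length $\omega_1$ stabilizing $(\beta, \gamma, \delta)$'' (respectively ``there are ordinals $\beta, \gamma, \delta < \omega_2$ of cofinality $\aleph_1$ and a reflecting sequence coding the real $r$'') is $\Sigma_1$ over $H(\omega_2)$: such a sequence has hereditary cardinality $\aleph_1$ and hence lies in $H(\omega_2)$, the ordinals involved lie in $H(\omega_2)$, and the oscillation conditions defining $s_{\beta, \gamma, \delta}$ are absolute once the relevant transitive collapses are available. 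The parameters $\vec{C}$ and $r$ also belong to $H(\omega_2)$. Thus it suffices to exhibit, for each part, a proper forcing whose extension contains the required sequence; $\Sigma_1$-absoluteness then guarantees that the sequence already exists in $V$, which is exactly what $(\dagger)$ and $(\ddagger)$ assert.

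For $(\dagger)$ let $\forceQ_{\beta, \gamma, \delta}$ have as conditions the countable continuous increasing chains $(N_\xi \, : \, \xi \le \xi_0)$, $\xi_0 < \omega_1$, of countable subsets of $\delta$, each containing $\{\omega_1, \beta, \gamma\}$ and satisfying the side requirements that make $n(N_\eta, N_\xi)$ and the finite sets $x, y, z$ well-defined, and such that the stabilization clause of $(\dagger)$ holds at every limit $\xi \le \xi_0$; the ordering is end-extension. Two things must be checked. The forcing must add a sequence of full length $\omega_1$ whose union is $\delta$: for this one verifies that for every $\xi < \omega_1$ the set of conditions of length at least $\xi$ is dense, and that for every $\alpha < \delta$ the set of conditions with $\alpha \in \bigcup_\xi N_\xi$ is dense. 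Both reduce to an \emph{extendibility lemma} asserting that any condition can be prolonged to any larger countable length, and to contain any prescribed ordinal below $\delta$, while the stabilization clause is maintained at the new limit stages. Granting properness (below), the generic union is then a reflecting sequence stabilizing $(\beta, \gamma, \delta)$ in $V[G]$, whence by $\Sigma_1$-absoluteness one exists in $V$.

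For $(\ddagger)$ fix the real $r$ and let the forcing be defined in the same way for a triple $(\beta_r, \gamma_r, \delta_r)$ of ordinals of cofinality $\aleph_1$ below $\omega_2$ — which we are free to pick arbitrarily — except that the stabilization clause is replaced by the coding clause: at each limit level $\xi$ of a condition we demand $s_{\beta_r, \gamma_r, \delta_r}(N_\eta, N_\xi) = r \upharpoonright n(N_\eta, N_\xi)$ for all sufficiently large $\eta < \xi$. Since $n(N_\eta, N_\xi) \to \omega$ as $\eta \uparrow \xi$, any condition meeting this clause satisfies $\bigcup_{\nu < \eta < \xi} s_{\beta_r, \gamma_r, \delta_r}(N_\eta, N_\xi) = r$ at each of its limit levels, so the generic union codes $r$. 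As before the extendibility lemma, now demanding that the prescribed oscillation pattern $r \upharpoonright n$ be achievable, together with properness yields the coding sequence $(P_\xi \, : \, \xi < \omega_1)$ in $V[G]$, and $\Sigma_1$-absoluteness brings it, together with the triple $(\beta_r, \gamma_r, \delta_r)$, down to $V$.

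The real content, and the main obstacle, is concentrated in two lemmas. Properness of $\forceQ$ is the familiar argument for forcings that add a continuous $\in$-increasing chain of countable models: given a countable $M \prec H(\theta)$ containing all parameters and a condition $p \in M$, one continues the chain by appending $M \cap \delta$ and argues that the result is $(M, \forceQ)$-generic because every dense set lying in $M$ is already met by an extension inside $M$, hence below $M \cap \delta$. The genuinely delicate point is the extendibility lemma, which is precisely where the oscillation machinery of Todorcevic employed in \cite{CV} is used. Prolonging a condition means choosing new countable sets whose ordinals, after collapsing, land in prescribed rungs of the ladders $C_{\beta_M}, C_{\gamma_M}, C_{\delta_M}$, so that the resulting finite sets $x, y, z$ force $o(x \setminus n, y \setminus n, z \setminus n)$ to equal the target string — constant for $(\dagger)$, an initial segment of $r$ for $(\ddagger)$. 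Establishing that the oscillation map is flexible enough to realize any such target, and to do so cofinally often as $n(N_\eta, N_\xi)$ increases to $\omega$, is the combinatorial heart of the construction; it rests on each $C_\alpha$ having order type $\omega$ and on the freedom to interleave the ordinals of the new sets among the rungs of the three ladders. With these two lemmas in hand, the uniform $\BPFA$-scheme of the first paragraph delivers both $(\dagger)$ and $(\ddagger)$.
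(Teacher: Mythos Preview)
The paper does not supply its own proof of this theorem; it is stated as a result of Caicedo and Veli\v{c}kovi\'{c} with a citation to \cite{CV}. Your outline follows exactly the route taken there: encode the desired reflecting sequence as the generic for a proper poset of countable continuous chains carrying the oscillation side conditions, and then invoke Bagaria's characterization of $\BPFA$ as $\Sigma_1$-absoluteness of $H(\omega_2)$ under proper forcing to pull the sequence (and, for $(\ddagger)$, the witnessing triple) back into $V$. That scheme is correct, and you have located the real work in the right place, namely the oscillation combinatorics guaranteeing that any prescribed finite pattern can be realized when extending a condition.

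One sharpening is worth making. You present properness and the extendibility lemma as two separate items, describing properness as ``the familiar argument'' of appending $M\cap\delta$ on top of a condition in $M$. In fact the two are entangled: the nontrivial content of properness here is precisely that, after meeting the dense sets inside $M$ one by one and then placing $M\cap\delta$ as the new top level, the resulting chain is still a \emph{condition}---that is, the stabilization clause (for $(\dagger)$) or the coding clause (for $(\ddagger)$) holds at the freshly created limit. Verifying this is exactly where the control over $o(x\setminus n, y\setminus n, z\setminus n)$ is used, so the ``delicate point'' is not a lemma alongside properness but the substance of the properness proof itself. With that caveat, your sketch matches the argument in \cite{CV}.
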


The coding induces a hierarchy on $H(\omega_2)$ whose initial segments are $\Sigma_1(\omega_1)$-definable. As we work over $L$, we can easily define ladder system $\vec{C}$ via inductively picking always the $<_L$-least real coding a cofinal set. From now on we exclusively work with this ladder system $\vec{C}$. Recall that we fixed a similarly definable, almost disjoint family of reals $D$.

\begin{definition} Let $\T_{\vec{C}}$ denote the following list of axioms:
\begin{enumerate}
\item $\forall x (|x| \le \aleph_1)$,
\item $\ZFP$,
\item Every subset of $\omega_1$ is coded by a real, relative to the almost disjoint family $D$.
\item Every triple of limit ordinals is stabilized in the sense of $\dagger$ using $\vec{C}$.
\item Every real is determined by a triple of ordinals in the sense of $\ddagger$ using $\vec{C}$.
\end{enumerate}
\end{definition}
A highly useful feature of models of $\T_{\vec{C}}$ is that they are uniquely determined by their height, consequentially the uncountable $\T_{\vec{C}}$-models form a hierarchy below $H(\omega_2)$.
\begin{theorem}
Let $\vec{C}$ be a ladder system in $M$, assumed to be a transitive model of $T_{\vec{C}}$.
Then, $M$ is the unique model of $\T_{\vec{C}}$ of height $M \cap Ord$.
\end{theorem}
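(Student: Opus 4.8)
The plan is to mimic the proof of the analogous uniqueness lemma for $\T$-models: first argue that a transitive model of $\TC$ is recoverable in an absolute way from the set of reals it contains, and then argue that this set of reals is in turn pinned down by the ordinal height $\theta := \mathrm{Ord} \cap M$ together with the canonical ladder system $\vec{C}$. Since $\vec{C}$ is defined by always choosing the $<_L$-least real coding a cofinal set of order type $\omega$, its definition is $\Sigma_1$ and hence absolute between transitive models of $\ZFP$; thus any two transitive $\TC$-models $M, M'$ with $\mathrm{Ord} \cap M = \mathrm{Ord} \cap M' = \theta$ share the same initial segment $\vec{C} \upharpoonright \theta$. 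It therefore suffices to show $2^{\omega} \cap M = 2^{\omega} \cap M'$, from which $M = M'$ will follow.

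First I would carry out the reduction to reals. By axiom (1) every set of $M$ has size at most $\aleph_1$ in $M$, and since $M \models \ZFP$ it carries, hereditarily, bijections of each of its sets with a subset of $\omega_1$. By axiom (3) every such subset of $\omega_1$ is coded by a real relative to the fixed almost disjoint family $F$, and the decoding $x \mapsto \{\alpha : x \cap f_\alpha \text{ finite}\}$ is absolute for transitive models of $\ZFP$ containing $F$ (which, like $\vec{C}$, is the same $L$-definable object in both models). Consequently $M$ can be reconstructed, by recursion on rank, from $2^{\omega} \cap M$ in a manner that does not depend on $M$; so $2^{\omega} \cap M = 2^{\omega} \cap M'$ already yields $M = M'$.

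Next I would show that $2^{\omega} \cap M$ is determined by $\theta$ and $\vec{C} \upharpoonright \theta$. The forward inclusion is immediate from axiom (5): every $r \in 2^{\omega} \cap M$ is coded, in the sense of $(\ddagger)$, by a triple of limit ordinals $(\beta_r,\gamma_r,\delta_r)$ with $\delta_r < \theta$, witnessed by a reflecting sequence of countable subsets of $\delta_r$ lying in $M$ (these countable sets of ordinals below $\theta$ are elements of $M$ by axiom (1)). The crucial point is that the decoding of a triple is absolute and single-valued: the oscillation words $s_{\beta,\gamma,\delta}(N,N')$ depend only on the countable sets $N \subseteq N'$ and on $\vec{C}$, so they are computed identically in $M$ and $M'$; and, by the stabilization guaranteed for every triple by axiom (4) (i.e. $(\dagger)$), the real read off a stabilized triple does not depend on the particular reflecting sequence chosen. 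Hence, whenever a triple $(\beta,\gamma,\delta)<\theta$ codes a real, it codes the same real in $M$ and in $M'$, and one obtains
$$2^{\omega} \cap M = \{\, r : \exists\,(\beta,\gamma,\delta)<\theta \text{ coding } r \text{ via } \vec{C}\,\} = 2^{\omega} \cap M',$$
a set depending only on $\theta$ and $\vec{C} \upharpoonright \theta$. Combined with the previous paragraph this gives $M = M'$.

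The step I expect to be the main obstacle is the single-valuedness and absoluteness of the decoding in the previous paragraph: one must verify that a triple which is stabilized in the sense of $(\dagger)$ determines its coded real independently of the witnessing reflecting sequence, so that the sequence supplied by $M$ and the (possibly different) sequence supplied by $M'$ read off the same $r$. This is exactly where the technical shape of the oscillation coding of Caicedo and Velickovic is needed — the stabilization of the oscillation values at each limit stage forces the limit word to be an invariant of the triple — and I would isolate and prove this invariance as a separate lemma before assembling the three steps above.
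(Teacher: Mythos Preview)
Your proposal is correct and follows essentially the same route as the paper: reduce to subsets of $\omega_1$, then to reals via almost disjoint coding, then argue that the triple-coding is absolute between $M$ and $M'$. The paper does this as a direct contradiction argument on a single $X \in M \setminus M'$ rather than your global reconstruction of $M$ from $2^{\omega} \cap M$, but the substance is the same.

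The ``separate lemma'' you anticipate as the main obstacle is exactly what the paper carries out, and its proof is short enough that you should not defer it: if $(N_\xi)_{\xi<\omega_1} \in M$ witnesses that $(\beta,\gamma,\delta)$ codes $r_X$ and $(P_\xi)_{\xi<\omega_1} \in M'$ witnesses that the same triple is stabilized, then $C := \{\xi : N_\xi = P_\xi\}$ is a club in $\omega_1$ (both are increasing continuous $\omega_1$-chains of countable sets with union $\delta$). At any limit point $\eta$ of $C$ and any $\zeta \in C$ with $\zeta < \eta$, one has $s_{\beta\gamma\delta}(P_\zeta,P_\eta) = s_{\beta\gamma\delta}(N_\zeta,N_\eta)$, so the value to which $(P_\xi)$ stabilizes at $\eta$ is $r_X$. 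Since this value is computed inside $M'$ from $(P_\xi)\upharpoonright(\eta+1) \in M'$, one gets $r_X \in M'$. This club-intersection trick is the entire content of the invariance you flagged; once you supply it, your write-up is complete. (A minor point: $\vec{C}$ is a ladder system on $\omega_1$, so there is no need to speak of $\vec{C}\upharpoonright\theta$; what matters is simply that $\vec{C}$, and hence $\omega_1$, is the same in $M$ and $M'$.)
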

\begin{proof}
Assume that $M$ and $M'$ are transitive, $M \cap Ord = M' \cap Ord$, $\vec{C} \in M \cap M'$, which implies that $M$ and $M'$ have the right $\omega_1$, and both $M$ and $M'$ are $\T_{\vec{C}}$ -models.  We work towards a contradiction, so assume that  $X \in M$ yet $X \notin M'$. As every set in $M$ has size at most $\aleph_1$ we can assume that $X \subset \omega_1$, hence there is a real $r_X \in M$ which codes $X$ with the help of the the almost disjoint family $F_{\vec{C}}$. Now $r_X$ is itself coded by a triple of ordinals $(\beta, \gamma, \delta) \in M$, thus there is a reflecting sequence $(N_{\xi} \, : \, \xi < \omega_1) \in M$ witnessing that $r_X$ is determined by $(\beta, \gamma, \delta)$. As $M \cap Ord = M' \cap Ord$, $(\beta, \gamma, \delta)$ is in $M'$ as well, and there is a reflecting sequence $(P_{\xi} \, : \, \xi < \omega_1) \in M'$ which witnesses that $(\beta, \gamma, \delta)$ is stabilized in $M'$. The set $C:= \{ \xi < \omega_1 \, : \, P_{\xi} = N_{\xi} \}$ is a club on $\omega_1$, hence if $\eta$ is a limit point of $C$, the reflecting sequence $(P_{\xi} \, : \, \xi < \omega_1) \in M'$ will stabilize at $\eta$ and compute $r_X$, hence $X$ is an element of $M'$ which is a contradiction.
\end{proof}
It is a fact that under $\BPFA$ and $\omega_1=\omega_1^L$, every subset of $\omega_1$ is coded by a real relative to the canonical almost disjoint family of $L$-reals $D$. In particular we will have reals $r_M$ which code the $\T_{\vec{C}}$-model $M$ in exactly the same way as in Lemma \ref{realscodeTmodels}. We immediately obtain:
\begin{lemma}
Assume that $\BPFA$ is true and $\aleph_1=\aleph_1^L$. Then for every $\T_{\vec{C}}$-model $M$, there is a real $r_M$ such that
\begin{align*}
(\ast) \quad \forall N (|N| = \aleph_0& \, \land N \text{ is transitive and }  \omega_1^N=(\omega_1^{L})^N  \land r_M \in N \\& \rightarrow
N \models  ``L[r_M] \models \text{ $(r_M)$ codes a $\T_{\vec{C}}$-model}"
\end{align*}
If $N$ is uncountable and transitive,  and $r_M$ is as above, then
\begin{align*}
N \models ``\text{The model decoded out of $r_M$ is a $\T_{\vec{C}}$-model.}"
\end{align*}

\end{lemma}

We aim now to argue for the $\Sigma^1_3$-uniformization property. Let $\varphi(v_0,v_1)$ be an arbitrary $\Sigma^1_3$-formula and assume that $x \in 2^{\omega}$ is such that there is a $y$ such that $\varphi(x,y)$ holds. We define the uniformizing function $f$ as in the last section:
\begin{align*}
f(x)=y \Leftrightarrow \exists M &(M \text{ transitive } \land \omega_1 \in M \land M \models \T_{\vec{C}} \land \\& M \models \text{“}\varphi(x,y) \land y \text{ is } < \text{-minimal}" \land \\& M \models \nexists N ( N \models T_{\vec{C}} \land \varphi(x,y)).
\end{align*}
By Shoenfield absoluteness, it is clear that $f$ does uniformize every $\Sigma^1_3$-subset of the plane. The above Lemma allows us to localize the above definition to obtain a $\Sigma^1_3$-definable uniformizing function.

\begin{lemma}
Assume $\BPFA$ is true and $\aleph_1=\aleph_1^L$. Let $\vec{C}$ be the canonical ladder system on $\omega_1^L$. Then the $\Sigma^1_3$-uniformization property is true. Any $\Sigma^1_3$-formula $\varphi(v_0,v_1)$ can be uniformized by the function $f_{\varphi}$ which is defined by the following $\Sigma^1_3$-formula
\begin{align*}
f_{\varphi}(x)=y \Leftrightarrow \exists r \forall N &(N \text{ countable and transitive } \land \omega_1^N= (\omega_1^L)^N \land r \in N \rightarrow  \\& N \models r\text{ codes the least } \T_{\vec{C}}\text{-model } M \text{ which sees that } \\& \exists y' \varphi(x,y') \text{ is true and } y \text{ is the least such witness in } M).
\end{align*}

\end{lemma}
Its proof is entirely analogous to the proof of Lemma \ref{uniformization}, so we skip it.

Going from lightface to boldface causes no difficulties at all and we immediately obtain the next result.

\begin{corollary}
Assume $\BPFA$ and that $\omega_1$ is accessible to some real $r$. Then every $\Sigma^1_3(x)$ relation in the plane can be uniformized by a $\Sigma^1_3(r,x)$-function.

\end{corollary}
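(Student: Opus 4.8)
The plan is to run the lightface argument of the previous theorem verbatim, but with every occurrence of the constructible universe $L$ replaced by $L[r]$, treating $r$ together with the parameter $x$ of the relation as fixed reals. The hypothesis that $\omega_1$ is accessible to $r$, i.e. $\omega_1 = \omega_1^{L[r]}$, is exactly what is needed to make the relativized coding machinery live on the true $\omega_1$, playing the role that $\omega_1 = \omega_1^L$ played before. Concretely, I fix a $\Sigma^1_3(x)$ formula $\varphi(v_0, v_1)$ with suppressed real parameter $x$, together with a real $x$ such that $\exists y\, \varphi(x,y)$ holds, and I aim to define a uniformizing function whose graph is $\Sigma^1_3(r,x)$.

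First I would replace the canonical ladder system $\vec{C} \in L$ by the $L[r]$-canonical ladder system $\vec{C}^{L[r]}$, obtained by always choosing the $<_{L[r]}$-least real coding a cofinal $\omega$-sequence, and likewise replace the almost disjoint family $F$ by its $L[r]$-analogue $F^{L[r]}$. Since $\omega_1 = \omega_1^{L[r]}$, this relativized $\vec{C}^{L[r]}$ is a genuine ladder system on the real $\omega_1$, so the Caicedo--Velickovic coding of reals by triples of ordinals, together with the stabilization clause $(\dagger)$ and the coding clause $(\ddagger)$, all remain available: $\BPFA$ supplies the required reflecting sequences for this ladder system just as for the $L$-canonical one, because $\BPFA$ is insensitive to which definable ladder system is chosen. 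I then define $\TC$-models using these relativized $\vec{C}^{L[r]}$ and $F^{L[r]}$, and the uniqueness-by-height theorem for $\TC$-models goes through unchanged, giving a $\Sigma_1(\omega_1)$-definable hierarchy of uncountable $\TC$-models below $H(\omega_2)$ and hence a wellorder of $P(\omega_1)$.

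With this hierarchy in hand, I would define the uniformizing function exactly as in the lightface proof: $f(x)=y$ holds iff there is a transitive $\TC$-model $M$ with $\omega_1 \in M$ which sees that $\exists y'\, \varphi(x,y')$ is true, is the shortest such model, and thinks that $y$ is the $<$-least witness. By Shoenfield absoluteness this genuinely uniformizes the $x$-section, and the point is that the definition is $\Sigma_1$ with parameters $\omega_1$ and the real $x$. Applying David's trick as before, the quantification over uncountable models is localized to countable transitive $N$ with $\omega_1^N = (\omega_1^{L[r]})^N$ containing a suitable coding real; the side condition now refers to $\omega_1^{L[r]}$ rather than $\omega_1^L$, so $r$ enters the formula as a parameter. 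Invoking the parametrized form of the theorem that, under $\MA$ (which follows from $\BPFA$) and $\omega_1 = \omega_1^{L[r]}$, every $\Sigma_1(\omega_1)$ relation is $\Sigma^1_3(r)$ — now carrying the additional harmless real parameter $x$ — yields that the graph of $f$ is $\Sigma^1_3(r,x)$, as required.

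The only point that genuinely needs verification, and the one I expect to be the main obstacle, is the parameter-uniformity of the two black boxes: that the Caicedo--Velickovic witnesses and the uniqueness theorem hold with respect to $\vec{C}^{L[r]}$ and $F^{L[r]}$ rather than their $L$-versions. This is where $\omega_1 = \omega_1^{L[r]}$ is essential, since it guarantees that $\vec{C}^{L[r]}$ has the correct domain and that countable elementary submodels collapse $r$'s version of $\omega_1$ to the right place; but once this is observed, the arguments are literally the relativizations of those already given, which is why the passage from lightface to boldface introduces no new difficulty.
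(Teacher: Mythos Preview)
Your proposal is correct and follows exactly the route the paper intends: the paper's own ``proof'' consists of the single sentence that ``going from lightface to boldface causes no difficulties at all,'' and your write-up is a faithful unpacking of that relativization, replacing $L$ by $L[r]$, using the $L[r]$-canonical $\vec{C}$ and $F$, and invoking the already-stated parametrized theorem that under $\MA$ and $\omega_1=\omega_1^{L[r]}$ every $\Sigma_1(\omega_1)$ relation is $\Sigma^1_3(r)$. The only blemish is notational: you use $x$ both for the boldface parameter of the $\Sigma^1_3(x)$ formula and for the first coordinate of the section being uniformized, which should be disambiguated.
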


\section{Some questions}
We end with a couple of open questions whose answers would require different methods than the one introduced here. 
\begin{question}
Is the $\Sigma^1_n$-uniformization property for $n > 3$ and $\MA$ or $\BPFA$ consistent?
\end{question}

\begin{question}
Is the $\Sigma^1_3$-uniformization property and a large continuum $(> \aleph_2)$  consistent?
\end{question}
Finally, it is natural to ask whether it is possible to have  a  global behaviour of the uniformization property in the presence of forcing axioms.
\begin{question}
Is there a model of $\MA$ for which the uniformization property simultaneously holds true for $\Sigma^1_n$, $n \ge 3$?
\end{question}

\end{document}